\documentclass[a4paper,10pt]{amsart}

\usepackage[british]{babel}
\usepackage[utf8]{inputenc}
\usepackage[T1]{fontenc}
\usepackage{lmodern}

\usepackage{amssymb}
\usepackage{amsmath}
\usepackage{amsthm}
\usepackage{bbm}

\usepackage[shortlabels]{enumitem}
\usepackage{tikz}
\usepackage{marginnote}
\usepackage{mathtools}
\usepackage{orcidlink}

\usepackage{hyperref}

\newcommand{\bbN}{\mathbb{N}}

\newcommand{\bbR}{\mathbb{R}}

\newcommand{\bbZ}{\mathbb{Z}}

\newcommand{\calO}{\mathcal{O}}

\newcommand{\N}{\bbN}
\newcommand{\R}{\bbR}

\newcommand{\Z}{\bbZ}

\DeclarePairedDelimiter{\norm}{\lVert}{\rVert}
\DeclarePairedDelimiter{\abs}{\lvert}{\rvert}
\DeclarePairedDelimiter{\dual}{\langle}{\rangle}
\DeclarePairedDelimiter{\set}{\lbrace}{\rbrace}

\DeclareMathOperator{\nspan}{span}

\DeclareMathOperator{\dom}{dom}

\newcommand{\ud}{\mathrm{d}}

\newcommand{\ue}{\mathrm{e}}

\theoremstyle{definition}
\newtheorem{definition}{Definition}[section]
\newtheorem{remark}[definition]{Remark}

\theoremstyle{plain}
\newtheorem{proposition}[definition]{Proposition}
\newtheorem{lemma}[definition]{Lemma}
\newtheorem{theorem}[definition]{Theorem}

\numberwithin{equation}{section}

\begin{document}

\title[Individual vs.\ Uniform Eventual Positivity]{Individual and Uniform Eventual Positivity of Self-Adjoint Semigroups}

\author{Alexander Dobrick \orcidlink{0000-0002-3308-3581}}
\address[A.~Dobrick]{Alexander Dobrick, Christian-Albrechts-Universität~zu~Kiel (Alumni), Arbeitsbereich~Analysis, 24118 Kiel, Germany}
\email{alexander.dobrick.math@web.de}

\date{\today}
\subjclass[2020]{Primary 47D06, 47B65; Secondary 46B42, 47B07}
\keywords{Eventually positive semigroup, self-adjoint semigroup, compact resolvent, Hilbert lattice}

\begin{abstract}
We construct a real self-adjoint semigroup on an $\ell^2$-space which is individually eventually strongly positive with respect to a strictly positive vector, but whose operators fail to be positive at arbitrarily large times. Moreover, the generator of this semigroup has compact resolvent. Consequently, individual and uniform eventual positivity are not equivalent for self-adjoint semigroups on $L^2$-spaces, even under the additional assumption that the generator has compact resolvent. The construction consists of a positive self-adjoint rank-one perturbation of a diagonal operator and a sequence of small spectral shifts on antisymmetric two-point modes.
\end{abstract}

\maketitle

\section{Introduction}

Positivity is one of the central structural assumptions in the spectral and asymptotic theory of operator semigroups. In the classical setting, one requires each operator of a semigroup to leave the positive cone invariant. This assumption is natural for many second-order evolution equations and gives access to the methods of infinite-dimensional Perron--Frobenius theory. For various equations of higher order, however, positivity fails for small times even though positive behaviour emerges asymptotically. This observation has led to the systematic study of \emph{eventually positive semigroups}. The foundations of the theory were developed by several authors (see \cite{Daners2016a, Daners2016b}), and in recent years the theory has reached considerable maturity (cf.~\cite{Arora2022a, Arora2023a, Arora2025a, Arora2021a, Arora2022b, Arora2022c, Arora2023b, Arora2023c, Arora2024a, Arora2025b, Daners2017, Daners2018a, Daners2018b, Daners2023, GlueckDISS, Glueck2022, MuiDISS, Mui2023}). Moreover, the lecture notes \cite{Arora2026a}, written for the 29th Internet Seminar, provide a comprehensive account of the theory.

There are two natural forms of eventual positivity. Individual eventual positivity allows the time after which an orbit becomes positive to depend on its initial value, whereas uniform eventual positivity requires one common time for all positive initial values. On finite-dimensional Banach lattices these notions coincide, since positivity can be tested on finitely many positive basis vectors. This argument breaks down in infinite dimensions, and the distinction between individual and uniform eventual positivity becomes substantial.

There is, however, a positive result under an appropriate smoothing assumption. Let $(\Omega, \Sigma, \mu)$ be a $\sigma$-finite measure space, let $0\leq u\in L^2(\Omega)$ and let $A$ be the generator of a self-adjoint semigroup on $L^2(\Omega)$. It follows from \cite[Corollary~13.2.2]{Arora2026a} that individual eventual strong positivity with respect to $u$ is equivalent to uniform eventual positivity with respect to $u\otimes u$ if
\begin{align*}
    \dom(A^m)\subseteq L^2(\Omega)_u
\end{align*}
for some $m\in\N$, where $L^2(\Omega)_u$ denotes the principal ideal generated by $u$. In fact, the proof only requires that $u$ be strictly positive almost everywhere and that $\ue^{t_0A}L^2(\Omega,\mu)\subseteq L^2(\Omega)_u$ for some $t_0>0$. Thus, sufficient regularisation into the principal ideal forces the individual and uniform notions to coincide.

Without such a smoothing assumption, individual and uniform eventual positivity are not equivalent in general (see \cite[Examples~11.1.2~and~13.1.2]{Arora2026a}). The Banach lattices in those counterexamples are, however, far from being $L^2$-spaces. Moreover, the semigroups exhibit considerable symmetry, which suggests that related constructions might lead to self-adjoint counterexamples on Hilbert lattices. This observation motivates the question considered here.

Self-adjoint semigroups on Hilbert spaces form a particularly rigid class: the spectral theorem provides a precise description of each orbit, while compactness of the resolvent reduces the asymptotic behaviour to isolated eigenspaces of finite multiplicity. Open Problem~14.1.1(a) in \cite{Arora2026a} asks whether individual and uniform eventual positivity are equivalent for real self-adjoint semigroups on $L^2$-spaces. Moreover, part~(b) asks whether individual eventual strong positivity with respect to a strictly positive vector $u$ is equivalent to uniform eventual positivity with respect to $u\otimes u$. The problem remains open there even under the additional assumption that the generator has compact resolvent. The present article answers both questions in the negative under precisely this additional assumption.

\subsection*{Contributions of this article}

The main result of the present article is the following theorem, which provides a natural counterexample to the open question.

\begin{theorem} \label{theorem:main}
    There exist a real self-adjoint $C_0$-semigroup $(\ue^{tA})_{t \geq 0}$ on a Hilbert lattice $H$ and a strictly positive vector $u \in H$ such that the following assertions hold:
    \begin{enumerate}[\upshape (i)]
        \item The generator $A$ has compact resolvent.
        \item The semigroup $(\ue^{tA})_{t \geq 0}$ is individually eventually strongly positive with respect to $u$.
        \item The operator $\ue^{nA}$ is not positive for each sufficiently large $n \in \N$.
    \end{enumerate}
    In particular, $(\ue^{tA})_{t \geq 0}$ is neither uniformly eventually positive nor uniformly eventually positive with respect to $u \otimes u$.
\end{theorem}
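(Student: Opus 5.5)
We build the example on $H=\ell^2(\N_0)$ with $u=(u_k)$ strictly positive, following the description in the abstract. We take a diagonal operator $D$ with real eigenvalues $-d_k\to-\infty$ (so $D$ has compact resolvent). We add a positive self-adjoint rank-one perturbation $P=\lambda\, v\otimes v$ with $v>0$, which is bounded and hence preserves compactness of the resolvent; this yields assertion (i). The index set is split into a ``ground'' coordinate $0$ and pairs $\{2j-1,2j\}$, $j\ge1$. On each pair we use the symmetric mode $s_j=(e_{2j-1}+e_{2j})/\sqrt2$ and the antisymmetric mode $a_j=(e_{2j-1}-e_{2j})/\sqrt2$. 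We arrange $A$ so that each $a_j$ is an eigenvector with eigenvalue $\mu_j$. On the orthogonal complement of $\overline{\nspan}\{a_j\}$ we arrange $A$ to have a simple leading eigenvalue $\mu_0$ with strictly positive eigenvector $u$ (Perron--Frobenius via the rank-one term). The ``small spectral shifts'' are chosen as $\mu_j=\mu_0-\varepsilon_j$ with $\varepsilon_j\downarrow0$ and the gap $\mu_0-\sup(\sigma(A)\setminus\{\mu_0,\mu_j\})=\delta>0$ uniform. The eigenvalues $\mu_j$ accumulate only at $\mu_0$ from below. To keep the resolvent compact, I would instead put the $\mu_j$ among eigenvalues diverging to $-\infty$ but with $u$-components decaying fast. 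Concretely, I would normalise so that $\ue^{tA}$ acts on the pair coefficients like a $2\times2$ block with eigenvalues $\mu_0$-ish and $\mu_j$.

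For (ii): take $0\le f\ne0$ and expand $\ue^{tA}f=\ue^{t\mu_0}\langle f,u\rangle u+\sum_j\ue^{t\mu_j}\langle f,a_j\rangle a_j+R(t)f$, where $\|R(t)f\|$ decays faster. Positivity with respect to $u$ needs, coordinatewise on each pair, $\ue^{t(\mu_j-\mu_0)}|\langle f,a_j\rangle|/\sqrt2\le c\,u_{2j}\langle f,u\rangle$. Since $|\langle f,a_j\rangle|\le|f_{2j-1}|+|f_{2j}|$ is square-summable and tends to $0$, while $\ue^{t(\mu_j-\mu_0)}\le1$, we need the tail where $|f_{2j}|/u_{2j}$ is not controlled. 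The key is to choose $\varepsilon_j$ and $u_{2j}$ so that for each fixed $f\in\ell^2$ we have $\sup_j \ue^{-t\varepsilon_j}\,|f_{2j-1}-f_{2j}|/u_{2j}\to0$ as $t\to\infty$. This fails for general $f$ unless $\varepsilon_j$ does not tend to $0$. Therefore the design should make the $a_j$-eigenvalues $\mu_j=\mu_0-\varepsilon_j$ with $\varepsilon_j\to\infty$ (compact resolvent) and $u_{2j}$ decaying like $\ue^{-\varepsilon_j\tau_j}$ for a sequence $\tau_j\to\infty$. Then for each $f$, the quotient $|f_{2j}|\ue^{-t\varepsilon_j}/u_{2j}$ tends to $0$ uniformly in $j$ as $t\to\infty$ provided $\varepsilon_j\tau_j$ grows slower than $t\varepsilon_j$ eventually; for finitely many small $j$ the finite-dimensional argument applies. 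The remainder $R(t)$ must likewise be bounded into the principal ideal $H_u$ with norm $o(\ue^{t\mu_0})$. I would achieve this by having the symmetric parts decay at rate $\ge\varepsilon_j$ as well. Then $\ue^{tA}f\ge c\,\ue^{t\mu_0}\langle f,u\rangle u>0$ for large $t$ (strong positivity, since $\langle f,u\rangle>0$).

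For (iii): for each large $n$ pick $j$ with $\tau_j\approx n$ and test $f=e_{2j-1}\ge0$. The $a_j$-component contributes $-\tfrac12\ue^{n\mu_j}$ at coordinate $2j$, whereas the $u$-part contributes $\ue^{n\mu_0}u_{2j-1}u_{2j}\approx\ue^{n\mu_0-2\varepsilon_j\tau_j}$. Choosing $\tau_j$ so that $2\varepsilon_j\tau_j>n\varepsilon_j$ at $n\approx\tau_j$ forces a negative entry, so $\ue^{nA}$ is not positive. Uniform eventual positivity (even with respect to $u\otimes u$) then fails by definition. The main obstacle is reconciling (ii) and (iii), that is, choosing $\varepsilon_j,\tau_j,u_j$ so that each fixed $\ell^2$ orbit eventually beats its antisymmetric error while the test vectors $e_{2j-1}$ do not. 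I would first try $\varepsilon_j=j$ and $u_{2j}\sim\ue^{-j^2}$, verifying (ii) with a dominated-tail estimate using $f\in\ell^2$.
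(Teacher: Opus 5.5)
Your proof of (ii) has a genuine gap, and tuning $\varepsilon_j,\tau_j,u_j$ will not close it. You try to dominate the antisymmetric modes and $R(t)f$ coordinatewise by small multiples of $\ue^{t\mu_0}\langle f,u\rangle u$. Take your concrete choice $\varepsilon_j=j$, $u_{2j}\sim\ue^{-j^2}$ and the vector $f=\sum_j j^{-1}e_{2j-1}\ge0$. Then the quotient $\lvert f_{2j-1}-f_{2j}\rvert\,\ue^{-t\varepsilon_j}/u_{2j}\sim j^{-1}\ue^{j(j-t)}$ is unbounded in $j$ for every fixed $t$. Since $\tau_j\to\infty$, infinitely many $j$ have $\tau_j>t$, so the claimed uniform convergence is false. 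You also require $R(t)$ to be bounded into $H_u$ with norm $o(\ue^{t\mu_0})$, so nothing compensates the contribution $-\frac{1}{2j}\ue^{t\mu_j}$ of $a_j$ to coordinate $2j$ of $\ue^{tA}f$. Hence $\ue^{tA}f$ has negative coordinates at every time, and not even individual eventual positivity holds. More generally, a gliding-hump (uniform boundedness) argument shows that your domination requirement forces $\min\{u_{2j-1},u_{2j}\}\ge c\,\ue^{-t_0\varepsilon_j}$ for some fixed $t_0$ and $c>0$. Your negativity mechanism in (iii) needs $u_{2j-1}u_{2j}<\frac12\ue^{-n\varepsilon_j}$, which then only allows $n<2t_0+O(1)$. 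This is the obstruction recalled in the introduction: two-sided domination by $u$ amounts to $\ue^{t_0A}H\subseteq H_u$, and under this smoothing condition individual eventual strong positivity already implies uniform eventual positivity with respect to $u\otimes u$, contradicting (iii). Relatedly, in (iii) you drop the part of $\frac12\langle\ue^{nA}s_j,s_j\rangle$ beyond its $u$-component. That part is nonnegative, and it is exactly what must cancel the antisymmetric term if (ii) is to hold, so it cannot be neglected. Finally, no operator is actually specified. A ``$2\times2$ block'' on each pair with one eigenvalue near $\mu_0$ contradicts either the simplicity of $\mu_0$ or the compactness of the resolvent.

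The missing idea is cancellation instead of domination. The paper starts from the positive semigroup generated by $A_0=-D+\frac{Du\otimes Du}{\langle Du,u\rangle}$, for which $A_0q_n=-nq_n$ and $Dp_n=np_n$. Hence $\langle\ue^{tA_0}e_n,e_{-n}\rangle=\frac12(\langle\ue^{tA_0}p_n,p_n\rangle-\ue^{-nt})\ge0$: the antisymmetric contribution $-\frac12\ue^{-nt}$ is cancelled exactly by the diagonal part of the symmetric one. Individual eventual strong positivity of $(\ue^{tA_0})$ is proved by a one-sided Duhamel estimate that simply discards the nonnegative term $\ue^{-tD}x$ (Proposition~\ref{proposition:base-individually-eventually-positive}). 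No upper bound into $H_u$ is used, and indeed $\ue^{tA_0}x\notin H_u$ in general. Only then are the antisymmetric eigenvalues shifted by $\delta_n=n\ue^{-2n^2}$. This gives $\ue^{tA}-\ue^{tA_0}=\sum_n h_n(t)\,q_n\otimes q_n$ with $\sup_{t\ge0}h_n(t)=O(\ue^{-2n^2})=o(u_n)$ (Lemma~\ref{lemma:kernel-growth}). It is this bound on the \emph{difference}, uniform in $t$, that transfers (ii). For (iii), the leading terms $\ue^{-n^2}$ cancel at $t=n$. One then needs the second-order estimate $\langle\ue^{nA_0}p_n,p_n\rangle\le\ue^{-n^2}+C\ue^{-3n^2}$, which rests on the boundedness of the convolution resolvent (Lemma~\ref{lemma:convolution}). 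Against it, the excess $\ue^{-(n-\delta_n)n}-\ue^{-n^2}\ge n^2\ue^{-3n^2}$ wins.
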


The construction separates the symmetric and antisymmetric modes of $H \coloneqq \ell^2(\Z)$. On the symmetric part, we consider a positive self-adjoint rank-one perturbation of a diagonal operator. The resulting reference semigroup is positive and converges in operator norm to the rank-one projection $u\otimes u$ (see Proposition~\ref{proposition:reference-semigroup}). We then shift the eigenvalues on the antisymmetric two-point modes by a sequence which is small compared with the corresponding coordinates of $u$ (see Proposition~\ref{proposition:semigroup-representation}). Consequently, each fixed positive orbit remains eventually strongly positive with respect to $u$ (see Proposition~\ref{proposition:individual}). At suitably chosen times, however, the shifted antisymmetric mode dominates the corresponding symmetric correction and produces a negative off-diagonal matrix entry (see Proposition~\ref{proposition:failure-of-uniform-eventual-positivity}). The construction therefore distinguishes individual from uniform eventual positivity in this specific Hilbert space setting.

\subsection*{Organization of the article}

The paper is organised as follows. In Section~\ref{section:construction-of-the-semigroup} we construct the reference generator $A_0$, introduce the compact perturbation on the antisymmetric modes and derive a representation formula for the semigroup generated by $A$. Section~\ref{section:eventual-positivity-properties} contains the proof of individual eventual strong positivity and the estimate which yields negative matrix entries at arbitrarily large times. Finally, Appendix~\ref{section:scalar-convolution-estimate} provides the scalar convolution result used to control the symmetric contribution.

\subsection*{Basic notions and notation}

We collect some of the specific terminology used throughout this article. For a comprehensive treatment of Banach lattices and related notions, we refer to the classical monographs \cite{Aliprantis1999, Aliprantis2006, MeyerNieberg1991, Schaefer1974, Zaanen2012}. For basic results from the theory of strongly continuous semigroups we refer to \cite{Batkai2017, Bobrowski2016, Davies1980, Engel2000, Engel2006, Goldstein1985, Hille1957, Lunardi1995, NagelEd, Pazy1983, Tanabe1979}.

We use the following terminology throughout. Let $H$ be a Hilbert lattice and let $(\ue^{tA})_{t\geq0}$ be a real $C_0$-semigroup on $H$. The semigroup is called \emph{individually eventually positive} if, for each $0\leq f\in H$, there exists $t_f\geq0$ such that $\ue^{tA}f\geq0$ for all $t\geq t_f$. It is called \emph{uniformly eventually positive} if there exists $t_0\geq0$ such that $\ue^{tA}\geq0$ for all $t\geq t_0$.

For $0<u\in H$, we write $f\succeq u$ if $f\geq cu$ for some $c>0$. The semigroup $(\ue^{tA})_{t\geq0}$ is called \emph{individually eventually strongly positive with respect to $u$} if, for each $0<f\in H$, there exists $t_f\geq0$ such that $\ue^{tA}f\succeq u$ for all $t\geq t_f$. Moreover, it is called \emph{uniformly eventually positive with respect to $u\otimes u$} if there exist $t_0\geq0$ and $c>0$ such that $\ue^{tA}\geq c(u\otimes u)$ for all $t\geq t_0$.

For $v,w\in H$, the rank-one operator $v\otimes w$ is given by $(v\otimes w)x\coloneqq v\dual{w,x}$ for all $x\in H$.

\section{Construction of the semigroup} \label{section:construction-of-the-semigroup}

Consider the Hilbert space $H \coloneqq \ell^2(\Z)$ and denote the canonical orthonormal basis of $\ell^2(\Z)$ by $(e_k)_{k \in \Z}$. We begin with the decomposition of $H$ into its even and odd parts.

\begin{lemma}
    The space $H$ decomposes as $H = H_{\mathrm{even}} \oplus H_{\mathrm{odd}}$, where
    \begin{align*}
        H_{\mathrm{even}}
        &\coloneqq
        \set{x\in\ell^2(\Z) : x_n=x_{-n}\text{ for all }n\in\Z}, \\
        H_{\mathrm{odd}}
        &\coloneqq
        \set{x\in\ell^2(\Z) : x_n=-x_{-n}\text{ for all }n\in\Z}.
    \end{align*}
    Moreover, the following assertions hold:
    \begin{enumerate}[\upshape (i)]
        \item The sequence $(p_n)_{n \in \N_0}$ is an orthonormal basis of $H_{\mathrm{even}}$, where $p_0 \coloneqq e_0$ and
        \begin{align*}
            p_n \coloneqq \frac{1}{\sqrt{2}}(e_n+e_{-n}) \in \ell^2(\Z), \quad n \in \N.
        \end{align*}
        \item The sequence $(q_n)_{n \in \N}$ is an orthonormal basis of $H_{\mathrm{odd}}$, where
        \begin{align*}
            q_n \coloneqq \frac{1}{\sqrt{2}} (e_n - e_{-n}) \in \ell^2(\Z), \quad n \in \N.
        \end{align*}
    \end{enumerate}
\end{lemma}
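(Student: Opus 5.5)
The plan is to use the reflection operator $R\colon \ell^2(\Z)\to\ell^2(\Z)$, $(Rx)_n \coloneqq x_{-n}$. It is linear, isometric and satisfies $R^2 = I$. It is also self-adjoint, because
\[
\dual{Rx,y} = \sum_{n} x_{-n} y_n = \sum_n x_n y_{-n} = \dual{x,Ry}.
\]
Set $P \coloneqq \tfrac12(I+R)$ and $Q \coloneqq \tfrac12(I-R)$. Then $P+Q=I$, $P^2=P$, $Q^2=Q$, $PQ=QP=0$, and both $P$ and $Q$ are self-adjoint, so they are complementary orthogonal projections. Their ranges are $\ker(I-R)=H_{\mathrm{even}}$ and $\ker(I+R)=H_{\mathrm{odd}}$. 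This gives the orthogonal decomposition $H = H_{\mathrm{even}} \oplus H_{\mathrm{odd}}$, and each summand is closed as the kernel of a bounded operator.

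For (i), orthonormality of $(p_n)_{n\in\N_0}$ follows from the disjointness of the supports $\{n,-n\}$ for distinct $n\ge 0$ together with a direct norm computation: $\norm{p_0}=1$, and for $n\ge1$ the vector $p_n$ has two entries equal to $1/\sqrt2$, so $\norm{p_n}^2 = \tfrac12+\tfrac12=1$. Each $p_n$ clearly lies in $H_{\mathrm{even}}$. For completeness, take $x\in H_{\mathrm{even}}$ orthogonal to all $p_n$. Orthogonality to $p_0$ gives $x_0=0$, and orthogonality to $p_n$ gives $\tfrac1{\sqrt2}(x_n+x_{-n}) = \sqrt2\,x_n = 0$ for $n\ge1$. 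Combined with $x_{-n}=x_n$, this yields $x=0$.

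For (ii) the argument is the same. Here $\dual{q_n,x} = \tfrac1{\sqrt2}(x_n - x_{-n}) = \sqrt2\,x_n$ for $x\in H_{\mathrm{odd}}$. Oddness forces $x_0=-x_0$, so $x_0=0$ automatically, which is why the index $n=0$ does not appear. Alternatively, one can verify directly that $Px=\sum_{n\ge0}\dual{p_n,x}p_n$ by comparing coordinates, and similarly for $Q$.

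There is no genuine obstacle here. The only points needing care are the special role of the index $0$, namely that $e_0$ spans the even part at the origin and that odd vectors vanish there, and the observation that completeness is checked within each subspace rather than in all of $H$.
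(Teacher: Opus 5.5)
Your proof is correct and complete. The paper states this lemma without proof, treating it as evident, so there is no competing argument to compare against. Your reflection operator $R$ and the complementary orthogonal projections $\frac12(I\pm R)$ give closedness and orthogonality of $H_{\mathrm{even}}$ and $H_{\mathrm{odd}}$ in one step. The completeness checks for $(p_n)_{n\in\N_0}$ and $(q_n)_{n\in\N}$ inside each closed summand are exactly the routine verifications the paper leaves to the reader.

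Your packaging also pays off later in the paper. The paper asserts, by ``straightforward calculation'', that $H_{\mathrm{even}}$ and $H_{\mathrm{odd}}$ reduce $D$, $A_0$ and $A$. With $R$ this becomes immediate, since all three operators commute with $R$:
\begin{enumerate}[(a)]
\item $D$ is diagonal with entries depending only on $\abs{k}$;
\item $Ru=u$, hence $RDu=Du$, so $R$ commutes with the rank-one term;
\item $Rq_n=-q_n$, so $R$ commutes with $B$.
\end{enumerate}

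One cosmetic point: if $\ell^2(\Z)$ is taken complex, your self-adjointness computation for $R$ should carry complex conjugates. The substitution $n\mapsto -n$ works verbatim.
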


Next, we consider the diagonal operator $D \colon \dom(D) \to \ell^2(\Z)$ with
\begin{align*}
    \dom(D) \coloneqq \set{x \in \ell^2(\Z) : \sum_{k \in \Z \setminus \{0\}} k^2 \abs{x_k}^2 < \infty}
\end{align*}
given by
\begin{align*}
    D e_k =
    \begin{cases}
        \abs{k} e_k, \quad &\text{if } k \neq 0, \\
        e_0, \quad &\text{if } k=0,
    \end{cases}
\end{align*}
and collect some of its properties in the following lemma.

\begin{lemma} \label{lemma:properties-of-d}
    Let $D \colon \dom D \to \ell^2(\Z)$ be the diagonal operator as defined above. Then the following assertions hold:
    \begin{enumerate}[\upshape (i)]
        \item The operator $D$ is self-adjoint and is positive in the Hilbert space sense.
        \item $D$ has compact resolvent.
        \item Both $H_{\mathrm{even}}$ and $H_{\mathrm{odd}}$ reduce $D$, and one has $Dq_n = nq_n$ for all $n \in \N$.
    \end{enumerate}
\end{lemma}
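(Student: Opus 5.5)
Since $D$ is a diagonal multiplication operator with respect to the orthonormal basis $(e_k)_{k\in\Z}$, all three assertions reduce to properties of its eigenvalue sequence $d_k \coloneqq \abs{k}$ for $k\neq 0$ and $d_0 \coloneqq 1$. For (i), we use that a multiplication operator by a real sequence, taken on its maximal domain $\dom(D)$, is self-adjoint. Concretely, $D$ is densely defined because all finitely supported sequences lie in $\dom(D)$. It is symmetric because $\dual{Dx,y}=\sum_k d_k x_k y_k=\dual{x,Dy}$ for real sequences, and similarly in the complexified case. If $y\in\dom(D^*)$, testing against $e_k$ gives $(D^*y)_k=d_k y_k$, so $(d_ky_k)_k\in\ell^2(\Z)$, i.e.\ $y\in\dom(D)$. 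Positivity in the Hilbert space sense is immediate from $\dual{Dx,x}=\sum_k d_k\abs{x_k}^2\geq \norm{x}^2\geq 0$, since $d_k\geq 1$ for all $k$.

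For (ii), the lower bound $d_k\geq 1$ shows that $0\in\rho(D)$, with $D^{-1}e_k=d_k^{-1}e_k$. Let $P_N$ denote the orthogonal projection onto $\nspan\set{e_k:\abs{k}\leq N}$. Then $D^{-1}P_N$ has finite rank, and
\begin{align*}
    \norm{D^{-1}-D^{-1}P_N}=\sup_{\abs{k}>N} d_k^{-1}=\frac{1}{N+1}\to 0 .
\end{align*}
Hence $D^{-1}$ is a norm limit of finite-rank operators and therefore compact. By the resolvent identity, compactness at one point of the resolvent set implies compactness of the resolvent at every point.

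For (iii), we check that the reflection $R$, given by $(Rx)_n \coloneqq x_{-n}$, commutes with $D$. It maps $\dom(D)$ into itself because $d_{-k}=d_k$, and it satisfies $RD=DR$ there. The orthogonal projections onto $H_{\mathrm{even}}$ and $H_{\mathrm{odd}}$ are $\tfrac12(I\pm R)$. These projections therefore leave $\dom(D)$ invariant and commute with $D$, which is precisely the statement that both subspaces reduce $D$. Finally, $Dq_n=\tfrac1{\sqrt2}(ne_n-ne_{-n})=nq_n$ for $n\in\N$.

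No step is a genuine obstacle. The only point requiring care is the domain inclusion $\dom(D^*)\subseteq\dom(D)$ in (i), together with the domain invariance under $R$ needed for the notion of a reducing subspace in (iii).
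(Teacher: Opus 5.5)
Your proof is correct and follows essentially the same route as the paper. Parts (i) and (iii) are the routine verifications the paper dismisses as obvious or a straightforward calculation; your reflection $R$ with projections $\tfrac12(I\pm R)$ is a clean way to make the reducing property precise. In (ii) you approximate the diagonal operator $D^{-1}$ by finite-rank truncations, exactly as the paper does with $R(-1,D)$.
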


\begin{proof}
    (i): This is obvious.

    (ii): Clearly, one has $\sigma_p(D) = \N$ and each eigenvalue of $D$ has finite multiplicity; more precisely, one has $\ker(I-D)=\nspan\set{e_{-1},e_0,e_1}$ and $\ker(nI-D)=\nspan\set{e_{-n},e_n}$ for all $n\geq2$. Moreover, $R(-1,D)$ is diagonal with respect to $(e_k)_{k\in\Z}$, and its diagonal entries converge to $0$ as $\abs{k}\to\infty$. Hence, $R(-1,D)$ is an operator-norm limit of finite-rank operators and therefore compact.

    (iii): This follows from a straightforward calculation.
\end{proof}

Now consider the sequence $u = (u_k)_{k \in \Z} \in \ell^2(\Z)$ given by
\begin{align*}
    u_k \coloneqq
    \begin{cases}
        \ue^{-3 k^2/2}, &\quad \text{if } k \neq 0,  \\
        \big(1 - 2 \sum_{k = 1}^\infty \ue^{-3 k^2}\big)^{1/2}, &\quad \text{if } k=0.
    \end{cases}
\end{align*}
Since
\begin{align*}
    2\sum_{k=1}^\infty \ue^{-3k^2}
    \leq 2\sum_{k=1}^\infty \ue^{-3k}
    =\frac{2\ue^{-3}}{1-\ue^{-3}}<1,
\end{align*}
one has $u_0>0$, and hence $u$ is strictly positive. Moreover, $\norm{u}_2=1$ and $u\in\dom D$. We define the operator
\begin{align*}
    A_0 \coloneqq -D + \frac{D u\otimes D u}{\dual{Du, u}}, \qquad \dom(A_0)\coloneqq\dom(D).
\end{align*}
The following proposition collects some properties of this operator.

\begin{proposition} \label{proposition:properties-of-a0}
    Let $A_0$ be the operator on $\ell^2(\Z)$ as defined above. Then the following assertions hold:
    \begin{enumerate}[\upshape (i)]
        \item The operator $A_0$ is real and self-adjoint.
        \item One has $A_0 \leq 0$ in the Hilbert space sense, and $s(A_0) = 0$ is a simple eigenvalue of $A_0$.
        \item $A_0$ has compact resolvent.
        \item Both $H_{\mathrm{even}}$ and $H_{\mathrm{odd}}$ reduce $A_0$ and one has $A_0|_{H_{\mathrm{odd}}} = -D|_{H_{\mathrm{odd}}}$.
    \end{enumerate}
\end{proposition}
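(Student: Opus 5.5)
The plan is to treat $A_0$ as a bounded, self-adjoint, rank-one perturbation of $-D$ and read off each assertion from Lemma~\ref{lemma:properties-of-d}. Set $w \coloneqq Du$. Since $u$ is strictly positive and $D$ has positive diagonal, $\dual{Du,u} = \sum_k d_k u_k^2 > 0$, so $P \coloneqq \frac{w\otimes w}{\dual{w,u}}$ is a bounded operator. It is self-adjoint because the sequence $w$ is real, and it is positive semidefinite.

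For (i), $A_0 = -D + P$ is self-adjoint on $\dom(D)$ by the Kato--Rellich theorem with a bounded symmetric perturbation. It is real because $D$ and $w$ are real.

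For (iii), I will use the resolvent identity $R(\lambda,A_0) = R(\lambda,-D)\bigl(I - P R(\lambda,-D)\bigr)^{-1}$ for large $\lambda$. Alternatively, one can note that $A_0$ has the same domain as $D$, so $(I+D)^{-1}$ compact and $A_0$ closed with $\dom(A_0) = \dom(D)$ imply that $R(\lambda,A_0) = (I+D)^{-1}(I+D)R(\lambda,A_0)$ is compact. Here $(I+D)R(\lambda,A_0)$ is bounded by the closed graph theorem.

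For (iv), $u$ is even, hence so is $w$, so $w \perp H_{\mathrm{odd}}$. Then $P$ vanishes on $H_{\mathrm{odd}}$ and maps into $H_{\mathrm{even}}$. Together with Lemma~\ref{lemma:properties-of-d}(iii), this shows that both subspaces reduce $A_0$ and that $A_0|_{H_{\mathrm{odd}}} = -D|_{H_{\mathrm{odd}}}$.

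The main step is (ii). For $x \in \dom(D)$, I write $\dual{-A_0 x, x} = \dual{Dx,x} - \frac{\dual{Du,x}^2}{\dual{Du,u}}$ and apply the Cauchy--Schwarz inequality in the inner product $[x,y] \coloneqq \dual{Dx,y}$. This inner product is positive definite because $D \geq I$. This gives $[u,x]^2 \leq [u,u]\,[x,x]$, that is, $\dual{-A_0x,x} \geq 0$, so $A_0 \leq 0$. Equality holds exactly when $x$ is a multiple of $u$. Hence $\ker A_0 \supseteq \nspan\set{u}$ (indeed $A_0 u = -Du + Du = 0$ directly), and any $x \in \ker A_0$ satisfies $\dual{-A_0x,x} = 0$, so $x \in \nspan\set{u}$. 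Since $A_0$ is self-adjoint with $A_0 \leq 0$, we have $\sigma(A_0) \subseteq (-\infty,0]$ and $0 \in \sigma_p(A_0)$, so $s(A_0) = 0$. Compact resolvent makes $0$ an isolated eigenvalue, and self-adjointness makes its algebraic multiplicity equal to its geometric multiplicity $1$, so the eigenvalue is simple.

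The only delicate point is the equality case of Cauchy--Schwarz, which is standard for a definite inner product.
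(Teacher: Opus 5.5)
Your proposal is correct and follows essentially the same route as the paper. It uses a bounded self-adjoint perturbation for (i), the same Neumann-series resolvent identity for (iii), evenness of $Du$ for (iv), and, for (ii), Cauchy--Schwarz with its equality case for the form $\langle Dx,y\rangle$, which is equivalent to the paper's use of $D^{1/2}$. Your explicit justification of $s(A_0)=0$, and of simplicity via self-adjointness, just spells out what the paper leaves implicit in ``in particular''.
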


\begin{proof}
    (i): $A_0$ is clearly real. The rank-one operator
    \begin{align*}
        K \coloneqq \frac{Du \otimes Du}{\dual{Du,u}}
    \end{align*}
    is bounded, real and self-adjoint. Since $-D$ is self-adjoint by Lemma~\ref{lemma:properties-of-d}(i), it follows from the bounded perturbation theorem for self-adjoint operators that $A_0$ is self-adjoint on $\dom A_0 = \dom D$.

    (ii): By definition of $A_0$, one has
    \begin{align*} A_0u = -Du + \frac{Du}{\dual{Du,u}}\dual{Du,u} = 0.
    \end{align*}
    Thus, $u \in \ker A_0$. Let $x \in \dom A_0$. Since
    \begin{align*}
        \dual{Du, u} = \norm{D^{1/2} u}^2,
    \end{align*}
    the Cauchy--Schwarz inequality yields
    \begin{align*}
        \dual{A_0 x, x} &= - \dual{D x, x} + \frac{\abs{\dual{D u, x}}^2}{\dual{D u, u}} \\
        &= -\norm{D^{1/2} x}^2 + \frac{\abs{\dual{D^{1/2} u, D^{1/2} x}}^2}{\norm{D^{1/2} u}^2} \leq 0.
    \end{align*}
    Hence, $A_0 \leq 0$. Moreover, equality holds if and only if equality holds in the Cauchy--Schwarz inequality, which is equivalent to $D^{1/2}x$ being a scalar multiple of $D^{1/2}u$. Since $D$ is injective, this is equivalent to $x$ being a scalar multiple of $u$. Consequently, $\ker A_0$ is spanned by $u$. In particular, $s(A_0) = 0$ is a simple eigenvalue of $A_0$.

    (iii): Since $K$ is bounded, $\norm{K R(\lambda, -D)} < 1$ for sufficiently large $\lambda > 0$. Then
    \begin{align*}
        R(\lambda, A_0) = R(\lambda, -D)
        (I-K R(\lambda, -D))^{-1}.
    \end{align*}
    Since $D$ has compact resolvent, $R(\lambda, A_0)$ is compact. Consequently, $A_0$ has compact resolvent as well.

    (iv): This follows from a straightforward calculation.
\end{proof}

We next show that $A_0$ generates a positive self-adjoint semigroup on $\ell^2(\Z)$.

\begin{proposition} \label{proposition:reference-semigroup}
The operator $A_0$ generates a self-adjoint $C_0$-semigroup $(\ue^{tA_0})_{t \geq 0}$ on $\ell^2(\Z)$ which is positive in the Banach lattice sense. Moreover,
\begin{align*}
    \ue^{tA_0} \to u \otimes u
\end{align*}
with respect to the operator norm as $t \to \infty$.
\end{proposition}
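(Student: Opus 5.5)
Since $A_0$ is self-adjoint and $A_0\le 0$ by Proposition~\ref{proposition:properties-of-a0}(i),(ii), the spectral theorem (equivalently, the Lumer--Phillips theorem for dissipative self-adjoint operators) shows that $A_0$ generates a contractive self-adjoint $C_0$-semigroup. For the convergence statement I would use Proposition~\ref{proposition:properties-of-a0}(ii),(iii). Because $A_0$ has compact resolvent, its spectrum consists of isolated eigenvalues of finite multiplicity accumulating only at $-\infty$. Since $0$ is a simple eigenvalue with normalised eigenvector $u$ (recall $\norm{u}_2=1$), there is a spectral gap $\delta>0$ with $\sigma(A_0)\setminus\{0\}\subseteq(-\infty,-\delta]$. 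With $P=u\otimes u$ the orthogonal spectral projection onto $\ker A_0$, the functional calculus gives
\begin{align*}
    \norm{\ue^{tA_0}-u\otimes u}=\norm{\ue^{tA_0}(I-P)}\le \ue^{-\delta t}\to 0 .
\end{align*}

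The main point is positivity in the lattice sense. My plan is to write $A_0=-D+K$ with $K=\frac{Du\otimes Du}{\dual{Du,u}}$. Here $Du$ has strictly positive entries, since $u$ is strictly positive and $D$ has positive diagonal entries. Hence $K$ is a bounded operator with nonnegative matrix entries, so $K\ge 0$ in the lattice sense. The operator $-D$ is diagonal and generates the positive semigroup $\ue^{-tD}$, which acts by multiplication with $\ue^{-t\abs{k}}$, respectively $\ue^{-t}$ at $k=0$. The Dyson--Phillips series then expresses $\ue^{tA_0}$ as a sum of iterated integrals of products of positive operators $\ue^{-sD}$ and $K$. Each term is positive, and the series converges in operator norm because $K$ is bounded. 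Therefore $\ue^{tA_0}\ge0$ for all $t\geq 0$.

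An alternative route to the same conclusion is the Trotter product formula, $\ue^{tA_0}=\lim_n(\ue^{-tD/n}\ue^{tK/n})^n$ strongly, where $\ue^{sK}=\sum_j s^jK^j/j!\ge0$. I do not expect any step to be a real obstacle. The only point needing care is the justification of the spectral gap, and this comes directly from compactness of the resolvent together with simplicity of the eigenvalue $0$. The positivity argument is a standard positive bounded perturbation argument.
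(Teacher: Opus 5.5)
Your proposal is correct and follows essentially the same route as the paper. Generation comes from $A_0$ being self-adjoint with $A_0\le 0$. Positivity comes from perturbing the positive diagonal semigroup $\ue^{-tD}$ by the bounded lattice-positive rank-one operator $K$; the paper simply cites the bounded positive perturbation theorem, whose standard proof is your Dyson--Phillips argument. Norm convergence to $u\otimes u$ comes from the spectral gap supplied by the compact resolvent together with simplicity of the eigenvalue $0$.
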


\begin{proof}
    Since $A_0 \leq 0$ by Proposition~\ref{proposition:properties-of-a0}(ii), the proposition in \cite[p.~91]{Engel2000} implies that $A_0$ generates a self-adjoint $C_0$-semigroup $(\ue^{tA_0})_{t \geq 0}$ on $\ell^2(\Z)$.

    Moreover, the semigroup generated by $-D$ is positive in the Banach lattice sense since each operator $\ue^{-tD}$ is diagonal with positive diagonal entries. Furthermore, $Du > 0$ and thus
    \begin{align*}
        \frac{Du \otimes Du}{\dual{Du,u}} \geq 0
    \end{align*}
    in the sense of Banach lattices. Hence, the bounded positive perturbation theorem (cf.~\cite[Corollary~11.7]{Batkai2017}) implies that $(\ue^{tA_0})_{t \geq 0}$ is positive in the Banach lattice sense.

    By Proposition~\ref{proposition:properties-of-a0}(iii), $A_0$ has compact resolvent. Hence, the spectrum of $A_0$ consists only of isolated eigenvalues of finite multiplicity and has no finite accumulation point. Since $0$ is a simple eigenvalue and $A_0 \leq 0$ by Proposition~\ref{proposition:properties-of-a0}(ii), there exists $\omega > 0$ such that
    \begin{align*}
        \sigma(A_0) \setminus \{0\} \subseteq (-\infty,-\omega].
    \end{align*}
    Since $A_0$ is self-adjoint and $\norm{u} = 1$, the spectral projection associated with the eigenvalue $0$ is given by $P = u \otimes u$. The spectral theorem and the spectral gap obtained above imply the norm convergence as $t \to \infty$.
\end{proof}

We now perturb the odd part of the semigroup. The perturbation will preserve individual eventual strong positivity but destroy uniform eventual positivity.

Define the operator
\begin{align*}
    B \coloneqq \sum_{n=1}^{\infty} \delta_n \, q_n \otimes q_n,
\end{align*}
where $\delta_n \coloneqq n \ue^{-2n^2}$ for $n \in \N$. Since $(q_n)_{n \in \N}$ is an orthonormal sequence and $\delta_n \to 0$ as $n \to \infty$, the series converges in operator norm. Hence, $B$ is compact as an operator-norm limit of finite-rank operators. Moreover, $B$ is real and self-adjoint. Consider the perturbed generator
\begin{align*}
    A\coloneqq A_0 + B, \qquad \dom(A)\coloneqq\dom(A_0).
\end{align*}
The following proposition collects the properties of $A$.

\begin{proposition} \label{proposition:properties-of-a}
    Let $A$ be the operator on $\ell^2(\Z)$ as defined above. Then the following assertions hold:
    \begin{enumerate}[\upshape (i)]
        \item The operator $A$ is real and self-adjoint.
        \item $A$ has compact resolvent.
        \item Both $H_{\mathrm{even}}$ and $H_{\mathrm{odd}}$ reduce $A$ and one has $A|_{H_{\mathrm{even}}} = A_0|_{H_{\mathrm{even}}}$ as well as $Aq_n = -(n - \delta_n)q_n$ for all $n \in \N$.
    \end{enumerate}
\end{proposition}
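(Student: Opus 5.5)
The plan mirrors the proof of Proposition~\ref{proposition:properties-of-a0}, treating $B$ as a bounded perturbation of $A_0$.

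For (i), realness is immediate. Each $q_n$ is a real vector, so $q_n \otimes q_n$ maps real sequences to real sequences. Since $\delta_n \in \R$ and the series defining $B$ converges in operator norm, $B$ is real. Together with the realness of $A_0$ from Proposition~\ref{proposition:properties-of-a0}(i), this shows that $A$ is real. For self-adjointness, $B$ is bounded and self-adjoint, being a norm limit of the self-adjoint operators $\sum_{n\le N}\delta_n q_n\otimes q_n$. Since $A_0$ is self-adjoint by Proposition~\ref{proposition:properties-of-a0}(i), the bounded perturbation theorem for self-adjoint operators gives that $A = A_0 + B$ is self-adjoint on $\dom A_0 = \dom D$.

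For (ii), I would repeat the Neumann series argument from Proposition~\ref{proposition:properties-of-a0}(iii). Because $A_0 \le 0$, one has $\norm{R(\lambda, A_0)} \le 1/\lambda$ for $\lambda > 0$. Hence $\norm{B R(\lambda, A_0)} < 1$ for large $\lambda$, and
\[
R(\lambda, A) = R(\lambda, A_0)\bigl(I - B R(\lambda, A_0)\bigr)^{-1}.
\]
This is compact because $R(\lambda, A_0)$ is compact by Proposition~\ref{proposition:properties-of-a0}(iii). Alternatively, one can simply note that $B$ is itself compact.

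For (iii), the key step is to locate the range and kernel of $B$. Its range lies in $\overline{\nspan}\set{q_n : n\in\N} = H_{\mathrm{odd}}$. Moreover, $B$ vanishes on $H_{\mathrm{even}}$, since $\dual{q_n, x} = 0$ for every $x \in H_{\mathrm{even}}$. Hence $B$ commutes with the orthogonal projection $P_{\mathrm{odd}}$ onto $H_{\mathrm{odd}}$, and so with the projection onto $H_{\mathrm{even}}$ as well. By Proposition~\ref{proposition:properties-of-a0}(iv), $A_0$ is reduced by both subspaces, so $P_{\mathrm{odd}}\dom A_0 \subseteq \dom A_0$ and $P_{\mathrm{odd}} A_0 x = A_0 P_{\mathrm{odd}} x$ for $x \in \dom A_0$. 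Adding the bounded operator $B$, which commutes with $P_{\mathrm{odd}}$, preserves both properties. Therefore $H_{\mathrm{even}}$ and $H_{\mathrm{odd}}$ reduce $A$.

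The two explicit formulas then follow directly. Since $B|_{H_{\mathrm{even}}} = 0$, we get $A|_{H_{\mathrm{even}}} = A_0|_{H_{\mathrm{even}}}$. For the odd modes, $q_n \in \dom D$ and $Bq_n = \delta_n q_n$ by orthonormality. Combining this with Proposition~\ref{proposition:properties-of-a0}(iv) and Lemma~\ref{lemma:properties-of-d}(iii) gives
\[
Aq_n = -Dq_n + \delta_n q_n = -(n - \delta_n) q_n .
\]
No step is a genuine obstacle. The only point needing care is checking the reduction in the operator-theoretic sense, that is, domain invariance under the projections, and this is inherited from $A_0$ because $B$ is bounded.
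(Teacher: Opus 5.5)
Your proof is correct and follows the paper's argument. Realness and self-adjointness come from the bounded perturbation theorem, compactness of the resolvent comes from that of $A_0$ together with boundedness of $B$, and the reduction and eigenvector identities come from $B|_{H_{\mathrm{even}}}=0$, $Bq_n=\delta_n q_n$ and $A_0q_n=-nq_n$; you simply write out details the paper leaves implicit, namely the Neumann-series formula for $R(\lambda,A)$ and the check that $B$ commutes with the projection onto $H_{\mathrm{odd}}$.
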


\begin{proof}
    (i): Since $A_0$ and $B$ are real, so is $A=A_0+B$. Moreover, the bounded perturbation theorem for self-adjoint operators (see \cite[Theorem~V.4.3]{Kato1995a}) implies that $A$ is self-adjoint on $\dom A=\dom A_0$.

    (ii): $A_0$ has compact resolvent by Proposition~\ref{proposition:properties-of-a0}(iii) and $B$ is bounded. Thus, the operator $A$ also has compact resolvent.

    (iii): Note that Lemma~\ref{lemma:properties-of-d}(iii) implies that $A_0 q_n = -n q_n$ for each $n \in \N$. Moreover, $B$ vanishes on $H_{\mathrm{even}}$ and satisfies $B q_n = \delta_n q_n$. Hence, both $H_{\mathrm{even}}$ and $H_{\mathrm{odd}}$ reduce $A$, and one has
    \begin{align*}
        A|_{H_{\mathrm{even}}} = A_0|_{H_{\mathrm{even}}},
    \end{align*}
    whereas $A q_n = -(n - \delta_n) q_n$ for all $n \in \N$.
\end{proof}

The operator $A$ is the generator of the semigroup that will serve as the desired counterexample. We obtain a representation formula for this semigroup.

\begin{proposition} \label{proposition:semigroup-representation}
    The operator $A$ generates a self-adjoint $C_0$-semigroup $(\ue^{tA})_{t \geq 0}$ on $\ell^2(\Z)$. Moreover, one has
    \begin{align*}
        \ue^{tA} = \ue^{tA_0} + \sum_{n=1}^{\infty} h_n(t)\,q_n\otimes q_n,
    \end{align*}
    where $h_n(t) \coloneqq \ue^{-(n-\delta_n)t}-\ue^{-nt}$ for all $t\geq0$ and $n\in\N$.
\end{proposition}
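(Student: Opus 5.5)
The plan is to reduce everything to the reducing subspaces $H_{\mathrm{even}}$ and $H_{\mathrm{odd}}$. First, generation. By Proposition~\ref{proposition:properties-of-a}(i), $A$ is self-adjoint. On $H_{\mathrm{even}}$ it coincides with $A_0$, which is $\leq 0$ by Proposition~\ref{proposition:properties-of-a0}(ii). On $H_{\mathrm{odd}}$ it is diagonal in the basis $(q_n)$ with eigenvalues $-(n-\delta_n)$. Since $\delta_n = n\ue^{-2n^2}<n$, these are negative, so $A\le 0$. Even without this sign, self-adjointness together with boundedness of $B$ gives $A\le \norm{B}$. The same proposition in \cite[p.~91]{Engel2000} used for Proposition~\ref{proposition:reference-semigroup} then shows that $A$ generates a self-adjoint $C_0$-semigroup.

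For the representation formula, I will write $P_{\mathrm{e}}$ and $P_{\mathrm{o}}$ for the orthogonal projections onto $H_{\mathrm{even}}$ and $H_{\mathrm{odd}}$. Both subspaces reduce $A$ and $A_0$ (Propositions~\ref{proposition:properties-of-a0}(iv) and \ref{proposition:properties-of-a}(iii)). Hence the semigroups commute with these projections and split as
\[
\ue^{tA} = \ue^{tA|_{H_{\mathrm{even}}}}P_{\mathrm{e}} + \ue^{tA|_{H_{\mathrm{odd}}}}P_{\mathrm{o}},
\]
and similarly for $A_0$. This uses the standard fact that the semigroup generated by the part of a generator in a reducing subspace is the restriction of the semigroup. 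The even parts agree because $A|_{H_{\mathrm{even}}}=A_0|_{H_{\mathrm{even}}}$.

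On $H_{\mathrm{odd}}$, I apply the spectral theorem for the diagonal self-adjoint operators in the orthonormal basis $(q_n)$, or verify directly that the diagonal bounded semigroup has the right generator. This gives
\[
\ue^{tA}P_{\mathrm{o}}=\sum_{n\ge1}\ue^{-(n-\delta_n)t}\,q_n\otimes q_n
\qquad\text{and}\qquad
\ue^{tA_0}P_{\mathrm{o}}=\sum_{n\ge1}\ue^{-nt}\,q_n\otimes q_n,
\]
with both series converging strongly. Subtracting the two yields $\ue^{tA}-\ue^{tA_0}=\sum_n h_n(t)\,q_n\otimes q_n$.

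The remaining point is that this last series converges. It is a diagonal operator with coefficients bounded by $2$, so it converges strongly. I will also note that it converges in operator norm for each fixed $t$, since $h_n(t)\to0$; this holds because $\delta_n t\to 0$ and $\ue^{-nt}\to0$ for $t>0$, and trivially for $t=0$. The main, though mild, obstacle is justifying the splitting of the semigroup along the reducing subspaces. I expect to handle it most cleanly by the functional calculus: both $A$ and $A_0$ commute with $P_{\mathrm{o}}$, so $\ue^{tA}$ commutes with $P_{\mathrm{o}}$ and its compression to $H_{\mathrm{odd}}$ is $\ue^{t A|_{H_{\mathrm{odd}}}}$.
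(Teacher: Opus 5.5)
Your proposal is correct and follows essentially the same route as the paper: you show $A\le 0$ via the even/odd reduction, apply the same generation result for self-adjoint operators, note that $\ue^{tA}$ and $\ue^{tA_0}$ agree on $H_{\mathrm{even}}$ and act diagonally on the basis $(q_n)$ of $H_{\mathrm{odd}}$, and deduce operator-norm convergence of the series from $h_n(t)\to 0$. Your functional-calculus justification that the semigroups split along the reducing subspaces simply spells out a step the paper treats as immediate.
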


\begin{proof}
    Since $A|_{H_{\mathrm{even}}}=A_0|_{H_{\mathrm{even}}}\leq0$ and $Aq_n=-(n-\delta_n)q_n$ with $0<\delta_n<n$ for each $n\in\N$, one has $A\leq0$. It therefore follows from the proposition in \cite[p.~91]{Engel2000} and Proposition~\ref{proposition:properties-of-a} that $A$ generates a self-adjoint $C_0$-semigroup $(\ue^{tA})_{t \geq 0}$ and $\ue^{tA}q_n = \ue^{-(n - \delta_n) t} q_n$ for each $n\in\N$, whereas $\ue^{tA}|_{H_{\mathrm{even}}} = \ue^{tA_0}|_{H_{\mathrm{even}}}$ for all $t \geq 0$. On the other hand, one has $\ue^{tA_0} q_n = \ue^{-nt}q_n$. Consequently,
    \begin{align*}
        \ue^{tA} - \ue^{tA_0} = \sum_{n=1}^{\infty}(\ue^{-(n - \delta_n) t} - \ue^{-n t}) q_n\otimes q_n = \sum_{n = 1}^\infty h_n(t)\,q_n\otimes q_n
    \end{align*}
    for all $t \geq 0$. Finally, for each fixed $t\geq0$ one has $h_n(t)\to 0$ as $n \to \infty$, and therefore the series converges in operator norm.
\end{proof}

\section{Eventual positivity properties} \label{section:eventual-positivity-properties}

We first establish individual eventual strong positivity for the reference semigroup. We then show that this property persists under the perturbation, while uniform eventual positivity fails.

\begin{proposition} \label{proposition:base-individually-eventually-positive}
    The semigroup $(\ue^{tA_0})_{t \geq 0}$ is individually eventually strongly positive with respect to $u$.
\end{proposition}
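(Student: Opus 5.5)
The plan is to use the variation-of-constants (Duhamel) formula for the rank-one perturbation $K=\frac{Du\otimes Du}{\dual{Du,u}}$ of $-D$, combined with the norm convergence from Proposition~\ref{proposition:reference-semigroup}. Norm convergence alone does not suffice. It yields $\ue^{tA_0}f\to\dual{u,f}u$ in $\ell^2$, but this gives no coordinatewise lower bound of the form $c\,u_k$, because $u_k$ decays like $\ue^{-3k^2/2}$. The exact form of the perturbation will supply this lower bound.

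Fix $0<f\in\ell^2(\Z)$. Since $u$ is strictly positive, $\dual{u,f}>0$. For $x\in\dom(D)$ we have $Kx=\frac{\dual{Du,x}}{\dual{Du,u}}Du$, so the Duhamel formula for bounded perturbations gives
\begin{align*}
    \ue^{tA_0}f=\ue^{-tD}f+\frac{1}{\dual{Du,u}}\int_0^t g(s)\,\ue^{-(t-s)D}Du\,\ud s,
    \qquad g(s)\coloneqq\dual{Du,\ue^{sA_0}f}.
\end{align*}
If $f\notin\dom(D)$, we write $g(s)=\dual{u,D\ue^{sA_0}f}$, or we simply use the formula $\ue^{tA_0}f=\ue^{-tD}f+\int_0^t\ue^{-(t-s)D}K\ue^{sA_0}f\,\ud s$, which holds for every $f$ because $K$ is bounded; here $Du\in\ell^2$, so $K$ makes sense on all of $\ell^2(\Z)$.

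Next we control the scalar function $g$. By Proposition~\ref{proposition:reference-semigroup}, $\ue^{sA_0}f\to\dual{u,f}u$ in norm, so
\begin{align*}
    g(s)\to\dual{u,f}\dual{Du,u}>0 \qquad (s\to\infty).
\end{align*}
Hence there is $s_0\geq0$ such that $g(s)\geq\frac12\dual{u,f}\dual{Du,u}$ for all $s\geq s_0$. By positivity of $(\ue^{sA_0})$ and $Du>0$, we also have $g\geq0$ on $[0,s_0]$.

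Since $\ue^{-tD}f\geq0$ and the integrand is positive, we may drop the first term and the part of the integral over $[0,s_0]$. It remains to evaluate the integral coordinatewise. For $k\neq0$ we have $(Du)_k=\abs{k}u_k$, so for $t\geq s_0+1$,
\begin{align*}
    (\ue^{tA_0}f)_k\geq\tfrac12\dual{u,f}\,\abs{k}u_k\int_{s_0}^t\ue^{-(t-s)\abs{k}}\,\ud s
    =\tfrac12\dual{u,f}\,u_k\bigl(1-\ue^{-(t-s_0)\abs{k}}\bigr)\geq\tfrac12(1-\ue^{-1})\dual{u,f}\,u_k .
\end{align*}
For $k=0$, the same computation with $(Du)_0=u_0$ gives the same bound. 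Therefore $\ue^{tA_0}f\geq c\,u$ for all $t\geq s_0+1$, with $c=\frac12(1-\ue^{-1})\dual{u,f}>0$.

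The crucial point is that the factor $\abs{k}$ in $Du$ exactly cancels the decay rate $\abs{k}$ of the integral $\int_{s_0}^t \ue^{-(t-s)\abs{k}}\,\ud s$, which yields a bound uniform in $k$. The only step that needs care is justifying the Duhamel formula for general $f$, and this is standard for bounded perturbations.
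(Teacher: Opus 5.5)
Your proposal is correct and follows the paper's proof essentially line by line. Both use Duhamel's formula for the bounded rank-one perturbation and an eventual lower bound on $\dual{Du,\ue^{sA_0}f}$ from the norm convergence $\ue^{sA_0}\to u\otimes u$. Both then discard the nonnegative remaining terms and cancel the factor $\abs{k}$ (resp.\ $1$ for $k=0$) in $Du$ against $\int_{s_0}^t \ue^{-(t-s)\abs{k}}\,\ud s$, which gives the same constant $\frac{1-\ue^{-1}}{2}\dual{u,f}$. Your side remark about $f\notin\dom(D)$ is unnecessary, since $Kx=\dual{Du,x}Du/\dual{Du,u}$ is defined for every $x\in\ell^2(\Z)$ because $Du\in\ell^2(\Z)$.
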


\begin{proof}
    Let $0 < x \in \ell^2(\Z)$ and set $\alpha \coloneqq \dual{u, x} > 0$.
    By Proposition~\ref{proposition:reference-semigroup}, one has
    \begin{align*}
        \dual{Du, \ue^{tA_0}x} \to \dual{Du, u} \dual{u, x} = \alpha \dual{Du, u}
    \end{align*}
    as $t \to \infty$. Hence, there exists $t_0 > 0$ such that
    \begin{align*}
        \dual{D u, \ue^{tA_0} x} \geq \frac{\alpha \dual{Du, u}}{2}
    \end{align*}
    for all $t\geq t_0$. Duhamel's formula then implies
    \begin{align*}
        \ue^{tA_0}x = \ue^{-tD} x + \frac{1}{\dual{Du, u}} \int_0^t \ue^{-(t-s)D} Du \dual{Du, \ue^{sA_0}x}\, \ud s.
    \end{align*}
    For $t\geq t_0+1$, the inequality $\dual{De_k,e_k}\geq1$ yields
    \begin{align*}
        \dual{\ue^{tA_0}x,e_k}
        &\geq \frac{\dual{Du,e_k}}{\dual{Du,u}}
        \int_{t_0}^t
        \ue^{-\dual{De_k,e_k}(t-s)}
        \frac{\alpha\dual{Du,u}}{2}\,\ud s \\
        &= \frac{\alpha}{2}
        \left(
            1-\ue^{-\dual{De_k,e_k}(t-t_0)}
        \right)\dual{u,e_k} \\
        &\geq \frac{1-\ue^{-1}}{2}\alpha\dual{u,e_k}.
    \end{align*}
    Therefore, $\ue^{tA_0} x \succeq u$ for all $t \geq t_0 + 1$. Hence, $(\ue^{tA_0})_{t \geq 0}$ is individually eventually strongly positive with respect to $u$.
\end{proof}

Next, we prove the following technical lemma, which yields an estimate for the perturbation.

\begin{lemma} \label{lemma:kernel-growth}
    For each $n\in\N$, let $h_n\colon[0,\infty)\to\R$ be given by $h_n(t)\coloneqq\ue^{-(n-\delta_n)t}-\ue^{-nt}$. Then one has
    \begin{align*}
        \frac{\sup_{t \geq 0} h_n(t)}{u_n} \longrightarrow 0
    \end{align*}
    as $n \to \infty$.
\end{lemma}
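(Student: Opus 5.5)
The plan is to bound $\sup_{t\geq0} h_n(t)$ directly by an elementary estimate and then compare the resulting bound with $u_n = \ue^{-3n^2/2}$. For each $t \geq 0$, factor
\begin{align*}
    h_n(t) = \ue^{-nt}\bigl(\ue^{\delta_n t} - 1\bigr).
\end{align*}
Since $\delta_n \leq n$, the mean value theorem gives $\ue^{\delta_n t}-1 \leq \delta_n t\,\ue^{\delta_n t}$. Hence
\begin{align*}
    0 \leq h_n(t) \leq \delta_n t\, \ue^{-(n-\delta_n)t}.
\end{align*}

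Next we maximise the right-hand side over $t\geq 0$. The function $t \mapsto t\ue^{-ct}$ with $c \coloneqq n-\delta_n>0$ attains its maximum $1/(\ue c)$ at $t = 1/c$. Therefore
\begin{align*}
    \sup_{t\geq0} h_n(t) \leq \frac{\delta_n}{\ue\,(n-\delta_n)}.
\end{align*}
Since $\delta_n = n\ue^{-2n^2} \leq n/2$ for all $n\in\N$, we have $n-\delta_n \geq n/2$. This yields
\begin{align*}
    \sup_{t\geq0} h_n(t) \leq \frac{2\delta_n}{\ue\, n} = \frac{2}{\ue}\,\ue^{-2n^2}.
\end{align*}

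Finally, dividing by $u_n = \ue^{-3n^2/2}$ for $n\geq1$ gives
\begin{align*}
    \frac{\sup_{t\geq0}h_n(t)}{u_n} \leq \frac{2}{\ue}\,\ue^{-n^2/2} \longrightarrow 0
\end{align*}
as $n\to\infty$, which proves the claim.

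There is no serious obstacle in this argument. The only point requiring care is that the sup of $h_n$ is of order $\delta_n/n$, not $\delta_n$. One must not crudely bound $\ue^{-nt}(\ue^{\delta_n t}-1)$ by $\ue^{\delta_n t}-1$, because that bound is unbounded in $t$. The factorisation combined with the maximisation of $t\ue^{-ct}$ handles this cleanly. The gap between $\ue^{-2n^2}$ and $\ue^{-3n^2/2}$ gives ample room for the comparison.
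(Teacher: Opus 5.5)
Your proof is correct and follows essentially the same route as the paper: the bound $h_n(t)\leq \delta_n t\,\ue^{-(n-\delta_n)t}$, then maximising $t\mapsto t\ue^{-(n-\delta_n)t}$, then comparing $\ue^{-2n^2}$ with $u_n=\ue^{-3n^2/2}$. The only difference is that you make the constant explicit via $n-\delta_n\geq n/2$, giving $C=2/\ue$, which the paper leaves implicit (and the hypothesis $\delta_n\leq n$ is not actually needed for your mean value step).
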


\begin{proof}
    Clearly, one has
    \begin{align*}
        0\leq h_n(t) = \ue^{-(n-\delta_n)t} (1 - \ue^{-\delta_nt}) \leq \delta_n t \ue^{-(n-\delta_n)t}
    \end{align*}
    for all $t \geq 0$ and $n \in \N$. Since
    \begin{align*}
        \sup_{t \geq 0} t \ue^{-(n-\delta_n)t} = \frac{1}{\ue(n-\delta_n)},
    \end{align*}
    there exists a constant $C>0$ such that $\sup_{t\geq0}h_n(t) \leq C\ue^{-2n^2}$ for all $n\in\N$. Thus, it follows that
    \begin{align*}
        \frac{\sup_{t\geq0}h_n(t)}{u_n} \leq \frac{C \ue^{-2 n^2}}{\ue^{-3n^2/2}} = C \ue^{-n^2/2} \longrightarrow 0
    \end{align*}
    as $n \to \infty$.
\end{proof}

The preceding estimate shows that the perturbation is negligible relative to the limiting positive profile. We can now prove individual eventual strong positivity of the perturbed semigroup.

\begin{proposition} \label{proposition:individual}
The semigroup $(\ue^{tA})_{t \geq 0}$ is individually eventually strongly positive with respect to $u$.
\end{proposition}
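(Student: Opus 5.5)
Fix $0<x\in\ell^2(\Z)$ and set $\alpha\coloneqq\dual{u,x}>0$. By Proposition~\ref{proposition:semigroup-representation}, the orbit splits as
\begin{align*}
    \ue^{tA}x = \ue^{tA_0}x + \sum_{n=1}^\infty h_n(t)\dual{q_n,x}\,q_n .
\end{align*}
The plan is to use the proof of Proposition~\ref{proposition:base-individually-eventually-positive}, which gives more than its statement: there is $t_0$ with $\ue^{tA_0}x\geq c\alpha u$ for all $t\geq t_0+1$, where $c\coloneqq(1-\ue^{-1})/2$ does not depend on $x$. It then suffices to show that the odd correction is eventually bounded in absolute value by $\tfrac{c\alpha}{2}u$, coordinatewise.

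The correction has $k$-th coordinate $0$ for $k=0$ and $\pm\frac{1}{\sqrt2}h_{|k|}(t)\dual{q_{|k|},x}$ for $k\neq0$. Since $\abs{\dual{q_n,x}}\leq\norm{x}$ and $u_{\pm n}=u_n$, it is enough to bound $h_n(t)\norm{x}/(\sqrt2\,u_n)$ uniformly in $n$ for large $t$. I will split the indices into high and low frequencies.

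\emph{High frequencies.} By Lemma~\ref{lemma:kernel-growth}, there is $N$ such that $\sup_{t\geq0}h_n(t)\leq \frac{c\alpha}{2\norm{x}}\sqrt2\,u_n$ for all $n>N$. This holds for all times simultaneously, so these coordinates need no further work.

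\emph{Low frequencies.} For the finitely many $n\leq N$, one has $h_n(t)\leq \ue^{-(n-\delta_n)t}\to0$ as $t\to\infty$, because $n-\delta_n>0$. Since $\min_{n\leq N}u_n>0$, there is $t_1$ such that the required bound holds for all $n\leq N$ and $t\geq t_1$.

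For $t\geq\max\{t_0+1,t_1\}$ this gives $\ue^{tA}x\geq\frac{c\alpha}{2}u$, so $\ue^{tA}x\succeq u$.

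The main obstacle is to make sure the lower bound for the unperturbed orbit is uniform in the coordinate $k$, with the constant proportional to $u_k$. That is exactly what the Duhamel argument in Proposition~\ref{proposition:base-individually-eventually-positive} provides, so I will quote that argument rather than only the statement. Everything else reduces to Lemma~\ref{lemma:kernel-growth} together with a finite-dimensional limit.
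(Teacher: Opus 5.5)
Your proposal is correct and takes the same route as the paper's proof. Both use the representation from Proposition~\ref{proposition:semigroup-representation} and a lower bound $\ue^{tA_0}x\geq cu$ with $c$ independent of $t$. Both handle the high modes with Lemma~\ref{lemma:kernel-growth}, handle the finitely many low modes via $h_n(t)\to0$, and note that the $e_0$-coordinate is untouched. You are right that the $t$-uniform constant comes from the Duhamel argument, not from the bare statement of Proposition~\ref{proposition:base-individually-eventually-positive}; the paper relies on this same uniformity without saying so.
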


\begin{proof}
    Fix $0 < x \in \ell^2(\Z)$. By Proposition~\ref{proposition:base-individually-eventually-positive}, there exist $t_0 \geq 0$ and $c > 0$ such that $\ue^{tA_0} x \geq cu$ for all $t\geq t_0$. Moreover, one has
    \begin{align*}
        (q_n\otimes q_n)x = \frac{x_n - x_{-n}}{2} (e_n - e_{-n})
    \end{align*}
    for all $n \in \N$ and hence
    \begin{align*}
        \abs{\dual{h_n(t)(q_n\otimes q_n)x,e_{\pm n}}} \leq \frac{h_n(t)}{\sqrt{2}}\norm{x}.
    \end{align*}
    By Lemma~\ref{lemma:kernel-growth}, there exists $N \in \N$ such that
    \begin{align*}
        \frac{\sup_{t \geq 0} h_n(t)}{\sqrt{2}}\norm{x}_2 \leq \frac{c}{2}u_n
    \end{align*}
    for all $n \geq N$. Thus, for all $t \geq t_0$ and $n \geq N$, one has
    \begin{align*}
    \dual{\ue^{tA}x, e_{\pm n}} &= \dual{\ue^{tA_0}x, e_{\pm n}} + \dual{h_n(t)(q_n \otimes q_n)x, e_{\pm n}} \\
    &\geq c u_{\pm n} -\frac{c}{2} u_{\pm n} = \frac{c}{2} u_{\pm n}.
    \end{align*}
    For each of the finitely many indices $1\leq n<N$, one has $h_n(t) \to 0$ as $t \to \infty$. Hence, after increasing $t_0$ if necessary, the same estimate holds for each $1\leq n<N$. Finally, $e_0 \in \ker B$, so
    \begin{align*}
    \dual{\ue^{tA}x, e_0} = \dual{\ue^{tA_0}x, e_0} \geq c\dual{u, e_0} \geq \frac{c}{2}\dual{u, e_0}
    \end{align*}
    for all $t \geq t_0$. Consequently, $\ue^{tA}x\succeq u$ for all $t\geq t_0$; that is, $(\ue^{tA})_{t\geq0}$ is individually eventually strongly positive with respect to $u$.
\end{proof}

It remains to show that the perturbed semigroup fails to be uniformly eventually positive. For this purpose, we estimate the symmetric contribution at suitable large times.

\begin{lemma} \label{lemma:reference-even-estimate}
    There exists $C > 0$ such that
    \begin{align*}
        \dual{\ue^{nA_0}|_{H_{\mathrm{even}}} p_n, p_n} \leq \ue^{-n^2} + C \ue^{-3n^2}
    \end{align*}
    for all sufficiently large $n \in \N$.
\end{lemma}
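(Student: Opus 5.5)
Since $p_n\in H_{\mathrm{even}}$ and $H_{\mathrm{even}}$ reduces $A_0$, we have $\dual{\ue^{nA_0}|_{H_{\mathrm{even}}}p_n,p_n}=\dual{\ue^{nA_0}p_n,p_n}$. We estimate this quantity with Duhamel's formula, exactly as in the proof of Proposition~\ref{proposition:base-individually-eventually-positive}, but with the rank-one perturbation written out explicitly:
\begin{align*}
    \ue^{tA_0}p_n = \ue^{-tD}p_n + \frac{1}{\dual{Du,u}}\int_0^t \ue^{-(t-s)D}Du\,\dual{Du,\ue^{sA_0}p_n}\,\ud s .
\end{align*}
Take the inner product with $p_n$ and evaluate at $t=n$. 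Because $Dp_n=np_n$, the first term contributes exactly $\ue^{-n^2}$. In the second term $\dual{\ue^{-(t-s)D}Du,p_n}=\ue^{-n(t-s)}\,n\,\dual{u,p_n}=\sqrt2\, n\,\ue^{-n(t-s)}\ue^{-3n^2/2}$. That term therefore equals
\begin{align*}
    \frac{\sqrt2\, n\,\ue^{-3n^2/2}}{\dual{Du,u}}\int_0^n \ue^{-n(n-s)}\,g_n(s)\,\ud s,
    \qquad g_n(s)\coloneqq \dual{Du,\ue^{sA_0}p_n}.
\end{align*}
This is the scalar convolution that the appendix is designed to control.

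The crucial step is to show that $g_n(s)$ is itself of order $\ue^{-3n^2/2}$, uniformly in $s\in[0,n]$ up to polynomial factors in $n$. A naive Cauchy--Schwarz bound gives only $\abs{g_n(s)}\le\norm{Du}$, which produces a total error of order $\ue^{-3n^2/2}$. That is not $O(\ue^{-3n^2})$. The improvement comes from self-adjointness: $g_n(s)=\dual{\ue^{sA_0}Du,p_n}$. Apply Duhamel again, now to $\ue^{sA_0}Du$. The free part is $\ue^{-sn}n\,\dual{u,p_n}\cdot n$ up to constants. The perturbation part is $\frac{1}{\dual{Du,u}}\int_0^s\ue^{-n(s-r)}\,n\sqrt2\,\ue^{-3n^2/2}\,\dual{Du,\ue^{rA_0}Du}\,\ud r$. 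The factor $\dual{Du,\ue^{rA_0}Du}$ is bounded by $\norm{Du}^2$, since the semigroup is contractive because $A_0\le0$. Hence
\begin{align*}
    \abs{g_n(s)}\le n^2\sqrt2\,\ue^{-3n^2/2}\Bigl(1+\frac{\norm{Du}^2}{n\dual{Du,u}}\Bigr)\le C_1 n^2\ue^{-3n^2/2}.
\end{align*}
Alternatively, this can be derived directly from the positivity of $g_n$ and a Volterra-type convolution inequality in the appendix. Substituting the bound back, the second term is at most
\begin{align*}
    \frac{\sqrt2\, n\,\ue^{-3n^2/2}}{\dual{Du,u}}\cdot C_1n^2\ue^{-3n^2/2}\int_0^n\ue^{-n(n-s)}\,\ud s\le C_2\, n^2\,\ue^{-3n^2}.
\end{align*}

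The remaining issue, and the main obstacle, is the polynomial factor $n^2$: the lemma asks for a clean constant $C$. One fix is to track the bound more carefully. In the free part the factor $\ue^{-sn}$ inside the $s$-integral gives $\int_0^n\ue^{-n(n-s)}\ue^{-ns}\,\ud s=n\ue^{-n^2}$, which is far smaller. In the perturbation part, the integral $\int_0^n\ue^{-n(n-s)}\,\ud s\le 1/n$ cancels one factor of $n$. The leftover $\sqrt2\,n$ factors from $Dp_n=np_n$ might still leave a power of $n$. In that case I would either absorb it by slightly weakening the exponent (it is harmless for the later application, where only a comparison with $h_n(n)\approx\delta_n n\ue^{-n^2}\sim n^2\ue^{-3n^2}$ matters) or appeal to the sharper scalar convolution result of the appendix, which I expect is formulated to yield exactly the constant $C$. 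Finally, positivity of $\ue^{nA_0}$ shows both terms are nonnegative, so only an upper bound is needed.
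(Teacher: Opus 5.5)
\textbf{The gap.} Your strategy is sound: Duhamel for $\dual{\ue^{nA_0}p_n,p_n}$, then a second Duhamel step for $g_n(s)=\dual{\ue^{sA_0}Du,p_n}$, with contractivity controlling $\dual{Du,\ue^{rA_0}Du}$. As written, however, the proposal does not prove the lemma. It ends with $\dual{\ue^{nA_0}p_n,p_n}\le \ue^{-n^2}+C_2n^2\ue^{-3n^2}$, and the fallbacks you offer fail.

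The polynomial loss is not harmless for Proposition~\ref{proposition:failure-of-uniform-eventual-positivity}. There the correction must be beaten by the excess $\ue^{-(n-\delta_n)n}-\ue^{-n^2}=n^2\ue^{-3n^2}(1+o(1))$, which is of exactly the same order. So your bound gives a negative entry only if $C_2<1$, and your argument does not provide that: your constants give $C_2\ge 2/\kappa$ with $\kappa\coloneqq\dual{Du,u}\approx 1$. Weakening the exponent to $\ue^{-(3-\varepsilon)n^2}$ would be fatal, and ``appealing to the appendix'' is not an argument.

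The loss comes from avoidable steps in your bound on $g_n$. The free part is exactly $\dual{\ue^{-sD}Du,p_n}=\sqrt2\,n\,\ue^{-ns}u_n$, with one factor $n$, not two. You also discarded both the factor $\ue^{-ns}$ and the factor $1/n$ coming from $\int_0^s\ue^{-n(s-r)}\,\ud r$.

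\textbf{The repair.} Keeping these, your own formula gives
\[
0\le g_n(s)\le \sqrt2\,u_n\Bigl(n\ue^{-ns}+\frac{\norm{Du}^2}{\kappa}\Bigr).
\]
Insert this into the outer integral, using $\int_0^n\ue^{-n(n-s)}\ue^{-ns}\,\ud s=n\ue^{-n^2}$ and $\int_0^n\ue^{-n(n-s)}\,\ud s\le 1/n$. This yields
\[
\dual{\ue^{nA_0}p_n,p_n}\le \ue^{-n^2}+\frac{2\ue^{-3n^2}}{\kappa}\Bigl(n^3\ue^{-n^2}+\frac{\norm{Du}^2}{\kappa}\Bigr),
\]
which is the lemma with no power of $n$ left. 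This is exactly the paper's estimate $y_n(s)=\pi_n^{-1/2}g_n(s)\le n\ue^{-ns}+M$ with $M=\norm{Du}^2/\kappa$.

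\textbf{Comparison with the paper.} The paper obtains $M$ by solving the Volterra equation $y_n(t)=n\ue^{-nt}+(k\ast y_n)(t)$ with the resolvent $\rho=\sum_m k^{\ast m}$. It then proves $\rho$ bounded via Stieltjes and complete Bernstein functions (Lemma~\ref{lemma:convolution}). Your second Duhamel step makes that machinery unnecessary. The function $r\mapsto\dual{Du,\ue^{rA_0}Du}/\kappa$ solves $\phi=k+k\ast\phi$, so by the uniqueness in Proposition~\ref{proposition:volterra-resolvent} it equals $\rho$. It is bounded by $\norm{Du}^2/\kappa=k(0)$ simply because $A_0\le 0$. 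Completed this way, your route is a correct and more elementary proof of the same estimate.
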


\begin{proof}
    We set $d_0\coloneqq1$ and $d_j\coloneqq j$ for $j\in\N$, and define
    \begin{align*}
        \pi_j \coloneqq
        \begin{cases}
            2 u_j^2, \quad &\text{if } j \in \N, \\
            u_0^2, \quad &\text{if } j = 0.
        \end{cases}
    \end{align*}
    and observe that the two identities
    \begin{align*}
        u = \sum_{j=0}^\infty \pi_j^{1/2} p_j, \qquad \norm{u}^2 = \sum_{j=0}^\infty \pi_j = 1.
    \end{align*}
    Moreover, we set $\kappa \coloneqq \dual{Du,u}$ and observe that
    \begin{align*}
        \kappa
        =
        \sum_{j=0}^\infty d_j\pi_j,
        \qquad
        \sum_{j=0}^\infty d_j^2\pi_j<\infty.
    \end{align*}
    Now consider the function
    \begin{align*}
        k(t) \coloneqq \frac{1}{\kappa} \dual{\ue^{-t D} Du, Du} = \frac{1}{\kappa} \sum_{j = 0}^\infty d_j^2 \pi_j \ue^{-d_jt}
    \end{align*}
    for $t \geq 0$. By Lemma~\ref{lemma:convolution}, the convolution resolvent
    \begin{align*}
        \rho \coloneqq \sum_{m = 1}^\infty k^{*m}
    \end{align*}
    is bounded, and thus, there exists $M > 0$ such that $\rho(t) \leq M$ for all $t \geq 0$.

    Since $A_0=-D+K$ for the bounded operator $K\coloneqq (Du\otimes Du)/\kappa$, Duhamel's formula for bounded perturbations \cite[Theorem~III.1.10]{Engel2000} yields
    \begin{align*}
        \ue^{tA_0}z
        =\ue^{-tD}z
        +\frac{1}{\kappa}\int_0^t
        \ue^{-(t-s)D}Du\dual{Du,\ue^{sA_0}z}\,\ud s
    \end{align*}
    for all $t\geq0$ and $z\in H$.

    For $n\in\N$, we define
    \begin{align*}
        y_n(t)
        \coloneqq
        \pi_n^{-1/2}
        \dual{\ue^{tA_0} p_n, Du}.
    \end{align*}
    All vectors occurring in this computation are real. Moreover, $Dp_n=np_n$ and $\dual{Du,p_n}=n\pi_n^{1/2}$. Hence, taking the inner product of the formula above with $Du$ and dividing by $\pi_n^{1/2}$ yields
    \begin{align*}
        y_n(t) = n \ue^{-nt} + \int_0^t k(t-s) y_n(s) \, \ud s.
    \end{align*}
    Proposition~\ref{proposition:volterra-resolvent} yields
    \begin{align*}
        y_n(t) = n \ue^{-nt} + \int_0^t \rho(t - s) n \ue^{-ns} \, \ud s
    \end{align*}
    for all $t \geq 0$. Hence, we obtain the estimate $y_n(t) \leq n \ue^{-n t} + M$ for all $t \geq 0$.

    Moreover,
    \begin{align*}
        \dual{\ue^{-(t-s)D}Du,p_n}
        =n\pi_n^{1/2}\ue^{-n(t-s)}.
    \end{align*}
    Hence, taking the inner product of the variation-of-constants formula with $p_n$ gives
    \begin{align*}
        \dual{\ue^{tA_0} p_n, p_n} = \ue^{-nt} + \frac{n \pi_n}{\kappa} \int_0^t \ue^{-n(t - s)} y_n(s) \,\ud s.
    \end{align*}
    Thus, we obtain
    \begin{align*}
        \dual{\ue^{nA_0} p_n, p_n} &\leq \ue^{-n^2} + \frac{n \pi_n}{\kappa} \int_0^n \ue^{-n(n - s)}(n \ue^{-ns} + M) \,\ud s \\
        &\leq \ue^{-n^2} + \frac{2 \ue^{-3n^2}}{\kappa}(n^3 \ue^{-n^2} + M).
    \end{align*}
    Therefore, there exists $C > 0$ such that
    \begin{align*}
        \dual{\ue^{nA_0} p_n, p_n} \leq \ue^{-n^2} + C \ue^{-3 n^2}
    \end{align*}
    for $n \in \N$ sufficiently large.
\end{proof}

We are now in a position to prove the main proposition of this section.

\begin{proposition} \label{proposition:failure-of-uniform-eventual-positivity}
    One has
    \begin{align*}
        \dual{\ue^{nA} e_n, e_{-n}} < 0
    \end{align*}
    for all sufficiently large $n \in \N$. In particular, $(\ue^{tA})_{t \geq 0}$ is not uniformly eventually positive.
\end{proposition}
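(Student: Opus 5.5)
We express the matrix entry through the even and odd parts. Since $e_n = \frac{1}{\sqrt2}(p_n + q_n)$ and $e_{-n} = \frac{1}{\sqrt2}(p_n - q_n)$ for $n\in\N$, and both $H_{\mathrm{even}}$ and $H_{\mathrm{odd}}$ reduce $A$ by Proposition~\ref{proposition:properties-of-a}(iii), the cross terms vanish. We obtain
\begin{align*}
    \dual{\ue^{nA} e_n, e_{-n}} = \frac12\Big(\dual{\ue^{nA}p_n,p_n} - \dual{\ue^{nA}q_n,q_n}\Big).
\end{align*}
By Proposition~\ref{proposition:properties-of-a}(iii) (equivalently Proposition~\ref{proposition:semigroup-representation}), one has $\dual{\ue^{nA}p_n,p_n} = \dual{\ue^{nA_0}p_n,p_n}$ and $\dual{\ue^{nA}q_n,q_n} = \ue^{-(n-\delta_n)n}$. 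Hence it suffices to show that
\begin{align*}
    \dual{\ue^{nA_0}p_n,p_n} < \ue^{-n^2 + n\delta_n}
\end{align*}
for all sufficiently large $n$.

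For the even part we invoke Lemma~\ref{lemma:reference-even-estimate}, which gives $\dual{\ue^{nA_0}p_n,p_n}\le \ue^{-n^2}+C\ue^{-3n^2}$. For the odd part, $n\delta_n = n^2\ue^{-2n^2}$, so
\begin{align*}
    \ue^{-n^2+n\delta_n} - \ue^{-n^2} = \ue^{-n^2}\big(\ue^{n^2\ue^{-2n^2}}-1\big) \ge \ue^{-n^2}\, n^2\ue^{-2n^2} = n^2\ue^{-3n^2},
\end{align*}
using $\ue^s - 1\ge s$. Therefore
\begin{align*}
    2\dual{\ue^{nA}e_n,e_{-n}} \le C\ue^{-3n^2} - n^2\ue^{-3n^2} = (C-n^2)\,\ue^{-3n^2} < 0
\end{align*}
once $n^2 > C$ and $n$ is large enough for Lemma~\ref{lemma:reference-even-estimate} to apply.

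The step that carries the real weight is Lemma~\ref{lemma:reference-even-estimate}. It shows that the symmetric correction is of order $\ue^{-3n^2}$, exactly the scale $u_n^2$, while the antisymmetric gain is $n^2\ue^{-3n^2}$, larger by the factor $n^2$. Since that lemma is available, the only care needed here is getting the exponents to line up: $\delta_n = n\ue^{-2n^2}$ at time $t=n$ produces precisely $n^2\ue^{-3n^2}$.

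For the final assertion, note that $e_n,e_{-n}\ge0$. A negative entry $\dual{\ue^{nA}e_n,e_{-n}}<0$ therefore shows that $\ue^{nA}$ is not positive for all large $n\in\N$. Consequently, there is no $t_0$ with $\ue^{tA}\ge0$ for all $t\ge t_0$, so the semigroup is not uniformly eventually positive. A fortiori, $\ue^{tA}\ge c\,u\otimes u\ge 0$ cannot hold for all large $t$ either.
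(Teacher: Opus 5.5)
Your proof is correct and follows essentially the same route as the paper. You split $e_{\pm n}$ into $p_n$ and $q_n$ and bound the symmetric term with Lemma~\ref{lemma:reference-even-estimate}. You then use $\ue^{s}-1\geq s$ to show that the antisymmetric mode gains $n^2\ue^{-3n^2}$, which gives $2\dual{\ue^{nA}e_n,e_{-n}}\leq (C-n^2)\ue^{-3n^2}<0$.
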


\begin{proof}
    Note that one has
    \begin{align*}
        e_n = \frac{1}{\sqrt{2}}(p_n + q_n), \qquad e_{-n} =
        \frac{1}{\sqrt{2}} (p_n - q_n)
    \end{align*}
    for all $n \in \N$. Since $H_{\mathrm{even}}$ and $H_{\mathrm{odd}}$ reduce $A$ and $Aq_n = -(n - \delta_n)q_n$ for all $n \in \N$, it follows from Lemma~\ref{lemma:reference-even-estimate} that
    \begin{align*}
        \dual{\ue^{nA}e_n, e_{-n}} &= \frac 1 2 \big(\dual{\ue^{nA_0} p_n, p_n} - \ue^{-(n-\delta_n)n} \big) \\
        &\leq \frac1 2 \big(\ue^{-n^2} + C \ue^{-3n^2} - \ue^{-(n-\delta_n)n} \big) \\
        &\leq \frac1 2 \big(\ue^{-n^2} + C \ue^{-3n^2} - \ue^{-n^2} - n^2 \ue^{-3n^2} \big) \\
        &\leq \frac 1 2 (C - n^2) \ue^{-3n^2} < 0
    \end{align*}
    for sufficiently large $n \in \N$. Hence, $\ue^{nA}$ is not positive for arbitrarily large $n \in \N$, so $(\ue^{tA})_{t \geq 0}$ cannot be uniformly eventually positive.
\end{proof}

\begin{remark}
    The perturbation on the $n$-th antisymmetric mode satisfies
    \begin{align*}
        \sup_{t\geq0}
        \big(
            \ue^{-(n-\delta_n)t}-\ue^{-nt}
        \big)
        =
        \calO(\ue^{-2n^2}),
    \end{align*}
    whereas the corresponding coordinate of the Perron vector is
    \begin{align*}
        u_n=\ue^{-3n^2/2}.
    \end{align*}
    Thus, the perturbation is $o(u_n)$ and does not affect eventual strong positivity of a fixed positive orbit. At time $t=n$, however, the spectral shift contributes an excess of order $n^2\ue^{-3n^2}$ above the common leading term $\ue^{-n^2}$, whereas the corresponding symmetric correction is of order $\ue^{-3n^2}$. Hence, the off-diagonal entry is negative for arbitrarily large times.
\end{remark}

Finally, we prove the main theorem of the present paper.

\begin{proof}[Proof~of~Theorem~\ref{theorem:main}]
    Let $H=\ell^2(\Z)$ and let $u$ be the strictly positive unit vector defined in Section~\ref{section:construction-of-the-semigroup}. Proposition~\ref{proposition:properties-of-a}(ii) shows that $A$ has compact resolvent, while Proposition~\ref{proposition:semigroup-representation} shows that $A$ generates the self-adjoint semigroup $(\ue^{tA})_{t \geq 0}$. Since $A$ is real by Proposition~\ref{proposition:properties-of-a}(i), this semigroup is real. Proposition~\ref{proposition:individual} shows that it is individually eventually strongly positive with respect to $u$. Finally, Proposition~\ref{proposition:failure-of-uniform-eventual-positivity} shows that $\ue^{nA}$ is not positive for each sufficiently large $n \in \N$.

    Since $\ell^2(\Z)$ is an $L^2$-space over a $\sigma$-finite measure space, this disproves the equivalence in part~(a) of \cite[Open Problem~14.1.1]{Arora2026a}. Moreover, individual eventual strong positivity with respect to $u$ implies individual eventual positivity, whereas uniform eventual positivity with respect to $u\otimes u$ implies uniform eventual positivity. Thus, the question in \cite[Open Problem~14.1.1(b)]{Arora2026a} also has a negative answer.
\end{proof}

\appendix
\section{A scalar convolution estimate} \label{section:scalar-convolution-estimate}

We recall the terminology used in this section. Let $f,g \colon [0,\infty) \to \R$ be locally integrable. Their \emph{convolution} is defined by
\begin{align*}
    (f \ast g)(t) \coloneqq \int_0^t f(t - s) g(s) \, \ud s
\end{align*}
for all $t \geq 0$. We recursively define the \emph{convolution powers} of $f$ by $f^{\ast 1} \coloneqq f$ and
\begin{align*}
    f^{\ast (m+1)} \coloneqq f \ast f^{\ast m}
\end{align*}
for all $m \in \N$. The \emph{Laplace transform} of $f$ is defined by
\begin{align*}
    \widehat f(\lambda) \coloneqq \int_0^\infty \ue^{-\lambda t}f(t) \, \ud t
\end{align*}
whenever the integral exists.

A function $f \colon (0,\infty) \to [0,\infty)$ is called \emph{completely monotone} if $f \in C^\infty((0,\infty))$ and
\begin{align*}
    (-1)^m f^{(m)}(\lambda) \geq 0
\end{align*}
for all $\lambda > 0$ and $m \in \N_0$. A function $f \colon (0,\infty) \to [0,\infty)$ is called a \emph{Bernstein function} if $f \in C^\infty((0,\infty))$ and $f'$ is completely monotone. A Bernstein function is called a \emph{complete Bernstein function} if its L\'evy measure has a completely monotone density (see \cite[Definition~6.1]{Schilling2012a}).

A function $f \colon (0,\infty) \to [0,\infty)$ is called a \emph{Stieltjes function} if it admits a representation
\begin{align*}
    f(\lambda) = \frac{a}{\lambda} + b + \int_{(0,\infty)} \frac{1}{\lambda+s} \, \mu(\ud s)
\end{align*}
for some $a,b \geq 0$ and a positive measure $\mu$ on $(0,\infty)$ such that
\begin{align*}
    \int_{(0,\infty)} \frac{1}{1+s} \, \mu(\ud s) < \infty.
\end{align*}

We isolate the auxiliary convolution results needed in the proof of Proposition~\ref{proposition:failure-of-uniform-eventual-positivity}. They are formulated in a way that arises directly from Duhamel's formula for a rank-one perturbation of a diagonal operator.

\begin{lemma} \label{lemma:convolution}
    Let $(d_j)_{j \in \N_0}$ be a sequence in $[1,\infty)$ and let $(\pi_j)_{j \in \N_0}$ be a sequence in $(0,\infty)$ such that the following assertions hold:
    \begin{enumerate}[\upshape (a)]
        \item $\displaystyle \sum_{j \in \N_0} \pi_j = 1$.
        \item $\kappa \coloneqq \displaystyle \sum_{j \in \N_0}d_j \pi_j < \infty$.
        \item $\displaystyle \sum_{j \in \N_0} d_j^2 \pi_j < \infty$.
    \end{enumerate}
    Consider the function
    \begin{align*}
        k \colon [0, \infty) \to \R, \quad k(t)\coloneqq \frac{1}{\kappa} \sum_{j \in \N_0} d_j^2 \pi_j \ue^{- d_j t}.
    \end{align*}
    Then the convolution resolvent
    \begin{align*}
        \rho \colon [0, \infty) \to \R, \quad \rho\coloneqq\sum_{m=1}^{\infty}k^{\ast m}
    \end{align*}
    is well-defined and bounded on $[0,\infty)$.
\end{lemma}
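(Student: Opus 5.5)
The plan is to identify $\rho$ through its Laplace transform as a completely monotone function, hence nonincreasing, and then bound it by its value at $0$. First I settle well-definedness and regularity near $0$. By (c) and $d_j\geq 1$, $k$ is a nonnegative, continuous, nonincreasing function with $\norm{k}_\infty = k(0) = \kappa^{-1}\sum_j d_j^2\pi_j<\infty$. The standard induction gives
\begin{align*}
    0\leq k^{\ast m}(t)\leq \norm{k}_\infty^m\, \frac{t^{m-1}}{(m-1)!}.
\end{align*}
Hence the series $\rho=\sum_m k^{\ast m}$ converges locally uniformly on $[0,\infty)$. So $\rho$ is continuous and nonnegative, with $\rho(0)=k(0)$, and it solves the renewal equation $\rho = k + k\ast\rho$.

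Next I compute Laplace transforms. By (b),
\begin{align*}
    \int_0^\infty k(t)\,\ud t=\kappa^{-1}\sum_j d_j\pi_j=1,
\end{align*}
and for $\lambda>0$ one has $\widehat k(\lambda)=\kappa^{-1}\sum_j d_j^2\pi_j/(\lambda+d_j)<1$. Summing the geometric series (all terms are nonnegative, so this is justified) gives $\widehat\rho = \widehat k/(1-\widehat k)$. Using $\kappa=\sum_j d_j\pi_j$, I rewrite the denominator as
\begin{align*}
    1-\widehat k(\lambda)=\frac{\lambda}{\kappa}\,g(\lambda), \qquad g(\lambda)\coloneqq\sum_j \frac{d_j\pi_j}{\lambda+d_j}.
\end{align*}
Here $g$ is a (nonzero) Stieltjes function, so that
\begin{align*}
    1+\widehat\rho(\lambda)=\frac{1}{1-\widehat k(\lambda)}=\frac{\kappa}{\lambda\, g(\lambda)}.
\end{align*}

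The key step is the classical structure theory (Schilling--Song--Vondraček). For a nonzero Stieltjes function $g$, the reciprocal $f=1/g$ is a complete Bernstein function, and $f(\lambda)/\lambda$ is again a Stieltjes function. Therefore there are $a,b\geq0$ and a measure $\mu$ on $(0,\infty)$ with
\begin{align*}
    1+\widehat\rho(\lambda)=\kappa\Big(\frac a\lambda+b+\int_{(0,\infty)}\frac{\mu(\ud s)}{\lambda+s}\Big).
\end{align*}
Letting $\lambda\to\infty$ and using $\widehat\rho(\lambda)\to0$ (since $\rho$ is bounded near $0$ and of at most exponential growth) forces $\kappa b=1$. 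The remaining terms are the Laplace transform of
\begin{align*}
    \tilde\rho(t)\coloneqq\kappa a+\kappa\int_{(0,\infty)}\ue^{-st}\,\mu(\ud s),
\end{align*}
so uniqueness of the Laplace transform gives $\rho=\tilde\rho$ almost everywhere. Continuity of $\rho$ and monotonicity of $\tilde\rho$ then upgrade this to equality on $(0,\infty)$. In particular $\rho$ is completely monotone, hence nonincreasing.

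Conclusion: $0\leq\rho(t)\leq\lim_{s\downarrow0}\rho(s)=\rho(0)=k(0)=\kappa^{-1}\sum_j d_j^2\pi_j<\infty$ for all $t\geq0$, which is exactly where hypothesis (c) enters. (In passing, $\rho(t)\to\kappa a$, consistent with the renewal theorem limit $\kappa$.)

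The main obstacle I expect is the Stieltjes/complete Bernstein step. One must verify carefully that $g$ is genuinely Stieltjes and nonvanishing, and invoke the correct theorems: $1/g$ is complete Bernstein, and $f/\lambda$ is Stieltjes for complete Bernstein $f$. One must also handle the identification of the constant $b$ with the Dirac part, so that the remainder is an honest function equal to $\rho$. Should this route become awkward, my fallback is a direct argument: combine the renewal theorem ($\rho(t)\to\kappa$, since $k$ is a directly Riemann integrable probability density with mean $1/\kappa$) with local boundedness from the first step.
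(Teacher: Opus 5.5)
Your proposal is correct and follows the paper's proof step for step. Both arguments use the factorial bound on $k^{\ast m}$, the identity $1-\widehat k(\lambda)=\frac{\lambda}{\kappa}g(\lambda)$ with $g$ a Stieltjes function, and the Schilling--Song--Vondra\v{c}ek theory to show that $\kappa/(\lambda g(\lambda))$ is Stieltjes with constant term $1$, which makes $\rho$ completely monotone with $0\leq\rho\leq\rho(0)=k(0)$. The only cosmetic difference is the order: you invert $g$ to a complete Bernstein function and then divide by $\lambda$, whereas the paper first forms the complete Bernstein function $\lambda g(\lambda)$ and then inverts, applying the same two theorems in the opposite order.
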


\begin{proof}
    Clearly, the function $k$ is bounded and completely monotone. Moreover, one has
    \begin{align*}
        \int_0^\infty k(t) \, \ud t = \frac{1}{\kappa}\sum_{j \in \N_0}d_j \pi_j = 1,
    \end{align*}
    and thus $k \in L^1([0, \infty))$. The Laplace transform of $k$ is given by
    \begin{align*}
        \widehat{k}(\lambda) = \frac{1}{\kappa} \sum_{j \in \N_0} \frac{d_j^2\pi_j}{\lambda+d_j}
    \end{align*}
    for all $\lambda > 0$. Hence,
    \begin{align*}
        1 - \widehat{k}(\lambda) = \frac{1}{\kappa} \sum_{j \in \N_0} d_j \pi_j \bigg(1 - \frac{d_j}{\lambda + d_j} \bigg) = \frac{\lambda}{\kappa} \sum_{j \in \N_0} \frac{d_j \pi_j}{\lambda + d_j}.
    \end{align*}
    Set $K \coloneqq \norm{k}_\infty$. An induction shows that
    \begin{align*}
        0 \leq k^{\ast m}(t) \leq \frac{K^m t^{m-1}}{(m-1)!}
    \end{align*}
    for all $t \geq 0$ and $m \in \N$. Hence, the series
    \begin{align*}
        \rho = \sum_{m=1}^\infty k^{\ast m}
    \end{align*}
    converges locally uniformly on $[0,\infty)$ and defines a continuous function. Since $\widehat{k}(\lambda)<1$ for each $\lambda>0$, Tonelli's theorem yields
    \begin{align*}
        \widehat{\rho}(\lambda)
        = \sum_{m=1}^\infty \widehat{k}(\lambda)^m
        = \frac{\widehat{k}(\lambda)}{1-\widehat{k}(\lambda)}.
    \end{align*}
    The function
    \begin{align*}
        h(\lambda) \coloneqq \sum_{j \in \N_0} \frac{d_j\pi_j}{\lambda+d_j}
    \end{align*}
    is a Stieltjes function. Consequently, $\lambda h(\lambda)$ is a complete Bernstein function by \cite[Theorem~6.2]{Schilling2012a} and thus
    \begin{align*}
        \frac{1}{1-\widehat{k}(\lambda)} = \frac{\kappa}{\lambda h(\lambda)}
    \end{align*}
    is a Stieltjes function by \cite[Theorem~7.3]{Schilling2012a}. Since this Stieltjes function converges to $1$ as $\lambda \to \infty$, its constant term in the Stieltjes representation from \cite[Definition~2.1]{Schilling2012a} is equal to $1$. Hence,
    \begin{align*}
        \frac{\widehat{k}(\lambda)}{1-\widehat{k}(\lambda)} = \frac{1}{1-\widehat{k}(\lambda)} - 1
    \end{align*}
    is again a Stieltjes function, now with vanishing constant term. By \cite[Definition~2.1]{Schilling2012a} and the identity
    \begin{align*}
        \frac{1}{\lambda+t}=\int_0^\infty \ue^{-\lambda s}\ue^{-ts}\,\ud s,
    \end{align*}
    it is therefore the Laplace transform of a completely monotone function; see also \cite[Theorem~1.4]{Schilling2012a}. Uniqueness of the Laplace transform and continuity on $(0,\infty)$ show that this function coincides with $\rho$. Hence, $\rho$ is completely monotone and therefore decreasing. Finally, local uniform convergence of the series gives $\rho(0)=k(0)<\infty$, and thus $0 \leq \rho(t) \leq k(0)$ for all $t \geq 0$.
\end{proof}

\begin{proposition} \label{proposition:volterra-resolvent}
    Let $k$ and $\rho$ be as in Lemma~\ref{lemma:convolution}. For each $f \in L^1_{\mathrm{loc}}([0,\infty))$, the Volterra equation
    \begin{align*}
        y = f + k \ast y
    \end{align*}
    has a unique solution $y \in L^1_{\mathrm{loc}}([0,\infty))$, which is given by
    \begin{align*}
        y = f + \rho \ast f.
    \end{align*}
\end{proposition}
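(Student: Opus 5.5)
Since $k$ is bounded and nonnegative, the equation $y=f+k\ast y$ is a Volterra equation of the second kind with a locally bounded kernel. The plan is to show three things in turn: the proposed formula $y=f+\rho\ast f$ is a solution, the solution is unique, and everything is well defined in $L^1_{\mathrm{loc}}([0,\infty))$. Throughout I will use the facts from the proof of Lemma~\ref{lemma:convolution}: $\rho$ is continuous and bounded on $[0,\infty)$, and
\begin{align*}
    0\le k^{\ast m}(t)\le \frac{K^m t^{m-1}}{(m-1)!}.
\end{align*}

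\emph{Well-definedness.} For $f\in L^1_{\mathrm{loc}}$ and a locally bounded $g$, the convolution $g\ast f$ is again in $L^1_{\mathrm{loc}}$. Indeed, on $[0,T]$ Tonelli's theorem gives
\begin{align*}
    \norm{g\ast f}_{L^1(0,T)}\le \norm{g}_{L^\infty(0,T)}\norm{f}_{L^1(0,T)}.
\end{align*}
Hence $\rho\ast f$ and $k\ast y$ make sense for every $y\in L^1_{\mathrm{loc}}$.

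\emph{Existence.} I first derive the resolvent identity $\rho=k+k\ast\rho$. The series $\rho=\sum_{m\ge1}k^{\ast m}$ converges uniformly on each $[0,T]$, so convolving with $k$ may be done termwise. This gives $k\ast\rho=\sum_{m\ge2}k^{\ast m}=\rho-k$. Now set $y\coloneqq f+\rho\ast f$. Using associativity of convolution on $L^1_{\mathrm{loc}}$ (again by Fubini--Tonelli on $[0,T]$), I compute
\begin{align*}
    k\ast y=k\ast f+(k\ast\rho)\ast f=k\ast f+(\rho-k)\ast f=\rho\ast f=y-f.
\end{align*}
So $y$ solves the equation.

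\emph{Uniqueness.} It suffices to show that $z=k\ast z$ with $z\in L^1_{\mathrm{loc}}$ forces $z=0$. Iterating gives $z=k^{\ast m}\ast z$ for all $m$. On $[0,T]$ the bound on $k^{\ast m}$ yields
\begin{align*}
    \norm{z}_{L^1(0,T)}\le \frac{K^mT^{m-1}}{(m-1)!}\norm{z}_{L^1(0,T)}.
\end{align*}
The right-hand side tends to $0$ as $m\to\infty$, so $z=0$ on every $[0,T]$.

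The main obstacle is only the careful justification of the termwise and associativity manipulations for merely locally integrable $f$. The local uniform convergence of the series defining $\rho$, together with the Tonelli estimate above on each bounded interval $[0,T]$, handles this. Nothing beyond boundedness of $k$ is needed; in particular the Stieltjes argument of Lemma~\ref{lemma:convolution} is not used here.
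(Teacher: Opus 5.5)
Your argument is essentially the paper's proof: existence via the resolvent identity $\rho = k + k\ast\rho$ plus associativity, and uniqueness via iterating $z = k^{\ast m}\ast z$ and using the factorial decay of $k^{\ast m}$ on $[0,T]$. One small slip: the Tonelli estimate should read $\norm{g\ast f}_{L^1(0,T)}\le T\norm{g}_{L^\infty(0,T)}\norm{f}_{L^1(0,T)}$ (or, as in the paper, $\norm{g}_{L^1(0,T)}\norm{f}_{L^1(0,T)}$, which gives the factor $K^mT^m/m!$); this changes only the constant in your uniqueness bound, not the fact that it tends to $0$.
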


\begin{proof}
    The definition of $\rho$ yields $\rho = k + k \ast \rho$. Hence, $y \coloneqq f + \rho \ast f$ satisfies
    \begin{align*}
        f + k \ast y = f + (k + k \ast \rho) \ast f = f + \rho \ast f = y.
    \end{align*}
    To prove uniqueness, let $w \in L^1_{\mathrm{loc}}([0,\infty))$ satisfy $w = k \ast w$. Iteration gives $w = k^{\ast m} \ast w$ for each $m \in \N$. For $T > 0$, boundedness of $k$ implies
    \begin{align*}
        \norm{k^{\ast m}}_{L^1(0,T)} \leq \frac{\norm{k}_\infty^m T^m}{m!}.
    \end{align*}
    Young's inequality therefore gives
    \begin{align*}
        \norm{w}_{L^1(0,T)} \leq \frac{\norm{k}_\infty^m T^m}{m!}\norm{w}_{L^1(0,T)}.
    \end{align*}
    For sufficiently large $m$, the factor on the right is strictly less than $1$. Thus, $w = 0$ on $[0,T]$. Since $T > 0$ was arbitrary, uniqueness follows.
\end{proof}

\subsection*{AI disclosure statement}
During the preparation of this work, the author used an OpenCode harness, custom-written auxiliary skills and large language models from OpenAI's GPT-5.6 family for editorial assistance and to simplify an earlier version of the paper; this included assistance with the statement and proof of Lemma~\ref{lemma:convolution}. The author reviewed and verified all AI-assisted material and takes full responsibility for the content of the manuscript. All mathematical ideas, results and claims are due to the author.

\bibliographystyle{plain}
\bibliography{literature}

@book{Engel2000,
	author = {Engel, Klaus-Jochen and Nagel, Rainer},
	title = {One-Parameter Semigroups for Linear Evolution Equations},
	series = {Graduate Texts in Mathematics},
	volume = {194},
	publisher = {Springer},
	address = {New York},
	year = {2000},
}

@book{MeyerNieberg1991,
	author = {Meyer-Nieberg, Peter},
	title = {Banach Lattices},
	series = {Universitext},
	publisher = {Springer},
	address = {Berlin},
	year = {1991},
}

@book{Aliprantis2006,
  title={Positive operators},
  author={Aliprantis, Charalambos D and Burkinshaw, Owen},
  volume={119},
  year={2006},
  publisher={Springer Science \& Business Media}
}

@book{Zaanen2012,
  title={Introduction to operator theory in Riesz spaces},
  author={Zaanen, Adriaan C},
  year={2012},
  publisher={Springer Science \& Business Media}
}

@Book{Batkai2017,
  author     = {B\'{a}tkai, Andr\'{a}s and Kramar Fijav\v{z}, Marjeta and Rhandi, Abdelaziz},
  publisher  = {Birkh\"{a}user/Springer, Cham},
  title      = {Positive operator semigroups},
  year       = {2017},
  isbn       = {978-3-319-42811-6},
  note       = {From finite to infinite dimensions, With a foreword by Rainer Nagel and Ulf Schlotterbeck},
  series     = {Operator Theory: Advances and Applications},
  volume     = {257},
  doi        = {10.1007/978-3-319-42813-0},
  mrclass    = {47-02 (15-02 47B65 47D06)},
  mrreviewer = {Christoph Kriegler},
  pages      = {xvii+364},
  url        = {https://doi.org/10.1007/978-3-319-42813-0},
}

@Book{Schaefer1974,
  author     = {Schaefer, Helmut H.},
  publisher  = {Springer-Verlag, New York-Heidelberg},
  title      = {Banach lattices and positive operators},
  year       = {1974},
  series     = {Die Grundlehren der mathematischen Wissenschaften, Band 215},
  mrclass    = {46A40 (47B55 47D20)},
  mrreviewer = {A. C. Zaanen},
  pages      = {xi+376},
}

@Book{Aliprantis1999,
  author    = {Aliprantis, Charalambos D. and Border, Kim C.},
  publisher = {Springer-Verlag, Berlin},
  title     = {{I}nfinite-{D}imensional {A}nalysis},
  year      = {1999},
  edition   = {Second},
  isbn      = {3-540-65854-8},
  note      = {A hitchhiker's guide},
  doi       = {10.1007/978-3-662-03961-8},
  mrclass   = {46-01 (00A05 28-01 46N10 47-01 54-01)},
  pages     = {xx+672},
  url       = {https://doi.org/10.1007/978-3-662-03961-8},
}

@article{Daners2016b,
  title={Eventually positive semigroups of linear operators},
  author={Daners, Daniel and Gl{\"u}ck, Jochen and Kennedy, James B.},
  journal={Journal of Mathematical Analysis and Applications},
  volume={433},
  number={2},
  pages={1561--1593},
  year={2016},
  publisher={Elsevier}
}

@article{Daners2016a,
  title={Eventually and asymptotically positive semigroups on Banach lattices},
  author={Daners, Daniel and Gl{\"u}ck, Jochen and Kennedy, James B.},
  journal={Journal of Differential Equations},
  volume={261},
  number={5},
  pages={2607--2649},
  year={2016},
  publisher={Elsevier}
}

@article{Daners2017,
  title={The role of domination and smoothing conditions in the theory of eventually positive semigroups},
  author={Daners, Daniel and Gl{\"u}ck, Jochen},
  journal={Bulletin of the Australian Mathematical Society},
  volume={96},
  number={2},
  pages={286--298},
  year={2017},
  publisher={Cambridge University Press}
}

@article{Daners2018a,
  title={A Criterion for the Uniform Eventual Positivity of Operator Semigroups},
  author={Daners, Daniel and Gl{\"u}ck, Jochen},
  journal={Integral Equations and Operator Theory},
  volume={90},
  number={4},
  pages={46},
  year={2018},
  publisher={Springer}
}

@article{Daners2018b,
  title={Towards a perturbation theory for eventually positive semigroups},
  author={Daners, Daniel and Gl{\"u}ck, Jochen},
  journal={Journal of Operator Theory},
  volume={79},
  number={2},
  pages={345--372},
  year={2018},
  publisher={JSTOR}
}

@article{Daners2023,
  title={Local uniform convergence and eventual positivity of solutions to biharmonic heat equations},
  author={Daners, Daniel and Gl{\"u}ck, Jochen and Mui, Jonathan},
  journal={Differential and Integral Equations},
  volume={36},
  number={9/10},
  pages={727--756},
  year={2023},
  publisher={Khayyam Publishing, Inc. West Palm Beach, FL, USA}
}

@article{Glueck2022,
  title={Evolution equations with eventually positive solutions},
  author={Gl{\"u}ck, Jochen},
  journal={European Mathematical Society Magazine},
  volume={123},
  pages={4--11},
  year={2022}
}

@PhdThesis{GlueckDISS,
  author = {Gl{\"u}ck, Jochen},
  school = {Universit{\"a}t Ulm},
  title  = {Invariant sets and long time behaviour of operator semigroups},
  year   = {2017},
}

@Article{Arora2021a,
  author   = {Arora, Sahiba and Gl\"{u}ck, Jochen},
  journal  = {Semigroup Forum},
  title    = {Spectrum and convergence of eventually positive operator semigroups},
  year     = {2021},
  issn     = {0037-1912},
  number   = {3},
  pages    = {791--811},
  volume   = {103},
  doi      = {10.1007/s00233-021-10204-y},
  fjournal = {Semigroup Forum},
  mrclass  = {47D06 (47A10)},
  url      = {https://doi.org/10.1007/s00233-021-10204-y},
}

@PhdThesis{MuiDISS,
  author = {Mui, Jonathan},
  school = {University of Sydney},
  title  = {Eventual positivity and asymptotic behaviour for higher-order evolution equations},
  year   = {2023},
}

@article{Mui2023,
  title={Spectral properties of locally eventually positive operator semigroups},
  author={Mui, Jonathan},
  journal = {Semigroup Forum},
  volume={106},
  number={2},
  pages={460--480},
  year={2023},
}

@article{Arora2022a,
  author  = {Arora, Sahiba},
  title   = {Locally eventually positive operator semigroups},
  journal = {Journal of Operator Theory},
  volume  = {88},
  number  = {1},
  pages   = {203--242},
  year    = {2022}
}

@article{Arora2022b,
  author  = {Arora, Sahiba and Gl{\"u}ck, Jochen},
  title   = {An operator theoretic approach to uniform (anti-)maximum principles},
  journal = {Journal of Differential Equations},
  volume  = {310},
  pages   = {164--197},
  year    = {2022}
}

@article{Arora2022c,
  author  = {Arora, Sahiba and Gl{\"u}ck, Jochen},
  title   = {Stability of (eventually) positive semigroups on spaces of continuous functions},
  journal = {Comptes Rendus Math{\'e}matique},
  volume  = {360},
  pages   = {771--775},
  year    = {2022}
}

@phdthesis{Arora2023a,
  author = {Arora, Sahiba},
  title  = {Long-term behaviour of operator semigroups and (anti-)maximum principles},
  school = {TU Dresden},
  year   = {2023}
}

@article{Arora2023b,
  author  = {Arora, Sahiba and Gl{\"u}ck, Jochen},
  title   = {A characterization of the individual maximum and anti-maximum principle},
  journal = {Mathematische Zeitschrift},
  volume  = {305},
  number  = {2},
  pages   = {Paper No. 24},
  note    = {17 pages},
  year    = {2023}
}

@incollection{Arora2023c,
  author    = {Arora, Sahiba and Gl{\"u}ck, Jochen},
  title     = {Criteria for eventual domination of operator semigroups and resolvents},
  booktitle = {Operators, Semigroups, Algebras and Function Theory},
  pages     = {1--26},
  publisher = {Birkh{\"a}user},
  address   = {Cham},
  year      = {2023}
}

@article{Arora2024a,
  author  = {Arora, Sahiba and Gl{\"u}ck, Jochen},
  title   = {Irreducibility of eventually positive semigroups},
  journal = {Studia Mathematica},
  volume  = {276},
  number  = {2},
  pages   = {99--129},
  year    = {2024}
}

@article{Arora2025a,
  author  = {Arora, Sahiba},
  title   = {Eventually positive semigroups: spectral and asymptotic analysis},
  journal = {Semigroup Forum},
  volume  = {110},
  number  = {2},
  pages   = {263--295},
  year    = {2025}
}

@misc{Arora2025b,
  author        = {Arora, Sahiba and Mui, Jonathan},
  title         = {Smoothing of operator semigroups under relatively bounded perturbations},
  year          = {2025},
  eprint        = {2501.18556},
  archivePrefix = {arXiv},
  primaryClass  = {math.FA}
}

@misc{Arora2026a,
  author = {Arora, Sahiba and Gl{\"u}ck, Jochen and Mui, Jonathan},
  title  = {Eventual Positivity},
  year   = {2026},
  url    = {https://fan.uni-wuppertal.de/fileadmin/mathe/reine_mathematik/funktionalanalysis/glueck/Sonstiges/2025-10_ISEM29/lecture_notes/ISem_29__Up_to_Chapter_14.pdf}
}

@Book{Engel2006,
  author     = {Engel, Klaus-Jochen and Nagel, Rainer},
  publisher  = {Springer, New York},
  title      = {A short course on operator semigroups},
  year       = {2006},
  isbn       = {978-0387-31341-2},
  series     = {Universitext},
  mrclass    = {47-01 (47D03 47D06)},
  mrnumber   = {2229872},
  mrreviewer = {Jacek Banasiak},
  pages      = {x+247},
}

@Book{Hille1957,
  author     = {Hille, Einar and Phillips, Ralph S.},
  publisher  = {American Mathematical Society, Providence, RI},
  title      = {Functional analysis and semi-groups},
  year       = {1957},
  note       = {rev. ed},
  series     = {American Mathematical Society Colloquium Publications, Vol. 31},
  mrclass    = {46.2X},
  mrnumber   = {89373},
  mrreviewer = {M. H. Stone},
  pages      = {xii+808},
}

@Book{Tanabe1979,
  author     = {Tanabe, Hiroki},
  publisher  = {Pitman (Advanced Publishing Program), Boston, Mass.-London},
  title      = {Equations of evolution},
  year       = {1979},
  isbn       = {0-273-01137-5},
  note       = {Translated from the Japanese by N. Mugibayashi and H. Haneda},
  series     = {Monographs and Studies in Mathematics},
  volume     = {6},
  mrclass    = {47D05 (34G10 35K22 47H05)},
  mrnumber   = {533824},
  mrreviewer = {A. Pazy},
  pages      = {xii+260},
}

@Book{Davies1980,
  author     = {Davies, Edward B.},
  publisher  = {Academic Press, Inc. [Harcourt Brace Jovanovich, Publishers], London-New York},
  title      = {One-parameter semigroups},
  year       = {1980},
  isbn       = {0-12-206280-9},
  series     = {London Mathematical Society Monographs},
  volume     = {15},
  mrclass    = {47D05},
  mrnumber   = {591851},
  mrreviewer = {S.\ Kurepa},
  pages      = {viii+230},
}

@Book{Pazy1983,
  author     = {Pazy, Amnon},
  publisher  = {Springer-Verlag, New York},
  title      = {Semigroups of linear operators and applications to partial differential equations},
  year       = {1983},
  isbn       = {0-387-90845-5},
  series     = {Applied Mathematical Sciences},
  volume     = {44},
  doi        = {10.1007/978-1-4612-5561-1},
  mrclass    = {47D05 (34Gxx 35Fxx 35Gxx 47H20)},
  mrnumber   = {710486},
  mrreviewer = {H. O. Fattorini},
  pages      = {viii+279},
  url        = {https://doi.org/10.1007/978-1-4612-5561-1},
}

@Book{Goldstein1985,
  author     = {Goldstein, Jerome A.},
  publisher  = {The Clarendon Press, Oxford University Press, New York},
  title      = {Semigroups of linear operators and applications},
  year       = {1985},
  isbn       = {0-19-503540-2},
  series     = {Oxford Mathematical Monographs},
  mrclass    = {47D05 (34K30 35R20)},
  mrnumber   = {790497},
  mrreviewer = {H. O. Fattorini},
  pages      = {x+245},
}

@Book{NagelEd,
  editor     = {Nagel, Rainer},
  publisher  = {Springer-Verlag, Berlin},
  title      = {One-parameter semigroups of positive operators},
  year       = {1986},
  isbn       = {3-540-16454-5},
  series     = {Lecture Notes in Mathematics},
  volume     = {1184},
  doi        = {10.1007/BFb0074922},
  mrclass    = {47D05 (46L55 47B55)},
  mrreviewer = {J. A. van Casteren},
  pages      = {x+460},
  shorthand  = {Nag86},
  url        = {https://doi.org/10.1007/BFb0074922},
}

@Book{Lunardi1995,
  author     = {Lunardi, Alessandra},
  publisher  = {Birkh\"{a}user Verlag, Basel},
  title      = {Analytic semigroups and optimal regularity in parabolic problems},
  year       = {1995},
  isbn       = {3-7643-5172-1},
  series     = {Progress in Nonlinear Differential Equations and their Applications},
  volume     = {16},
  doi        = {10.1007/978-3-0348-9234-6},
  mrclass    = {47D06 (34G20 35Kxx 46M35 46N20 47-02 47N20 58D25)},
  mrnumber   = {1329547},
  mrreviewer = {Paolo\ Acquistapace},
  pages      = {xviii+424},
  url        = {https://doi.org/10.1007/978-3-0348-9234-6},
}

@Book{Bobrowski2016,
  author     = {Bobrowski, Adam},
  publisher  = {Cambridge University Press, Cambridge},
  title      = {Convergence of one-parameter operator semigroups},
  year       = {2016},
  isbn       = {978-1-107-13743-1},
  note       = {In models of mathematical biology and elsewhere},
  series     = {New Mathematical Monographs},
  volume     = {30},
  doi        = {10.1017/CBO9781316480663},
  mrclass    = {47-02 (47D06 92D10 92D25)},
  mrnumber   = {3526064},
  mrreviewer = {Przemo\ Kranz},
  pages      = {xiv+438},
  url        = {https://doi.org/10.1017/CBO9781316480663},
}

@book{Kato1995a,
  author    = {Kato, Tosio},
  title     = {Perturbation Theory for Linear Operators},
  edition   = {2},
  publisher = {Springer},
  address   = {Berlin},
  year      = {1995},
  series    = {Classics in Mathematics}
}

@book{Schilling2012a,
  author    = {Schilling, Ren{\'e} L. and Song, Renming and Vondra{\v{c}}ek, Zoran},
  title     = {Bernstein Functions: Theory and Applications},
  edition   = {2},
  series    = {De Gruyter Studies in Mathematics},
  volume    = {37},
  publisher = {De Gruyter},
  address   = {Berlin},
  year      = {2012}
}

\end{document}